\newtheorem{thm}{Theorem}[section]
\theoremstyle{definition}
\newtheorem{cor}{Corollary}[section]
\newtheorem{lem}{Lemma}[section]
\begin{document}
\title{\textbf{Lih Wang and Dittert's Conjectures on Permanents}}
\author{Divya.K.U and K. Somasundaram}
\date{\small{Department of Mathematics, Amrita School of Physical Sciences, Coimbatore \\ Amrita Vishwa Vidyapeetham, India.\\ 
 divyaku93@gmail.com, s\_sundaram@cb.amrita.edu}}

\maketitle
\begin{abstract}
\indent Let $\Omega_n$ denote the set of all doubly stochastic matrices of order $n$. Lih and Wang conjectured that for $n\geq3$, per$(tJ_n+(1-t)A)\leq t $per$J_n+(1-t)$per$A$, for all $A\in\Omega_n$ and all $t \in [0.5,1]$, where $J_n$ is the $n \times n$ matrix with each entry equal to $\frac{1}{n}$. This conjecture was proved partially for $n \leq 5$. \\
\indent Let $K_n$ denote the set of non-negative $n\times n$ matrices whose elements have sum $n$. Let $\phi$ be a real valued function defined on $K_n$ by $\phi(X)=\prod_{i=1}^{n}r_i+\prod_{j=1}^{n}c_j$ - per$X$ for $X\in K_n$ with row sum vector $(r_1,r_2,...r_n)$ and column sum vector $(c_1,c_2,...c_n)$. A matrix $A\in K_n$ is called a $\phi$-maximizing matrix if $\phi(A)\geq \phi(X)$ for all $X\in K_n$. Dittert conjectured that $J_n$ is the unique $\phi$-maximizing matrix on $K_n$. Sinkhorn proved the conjecture for $n=2$ and Hwang proved it for $n=3$. \\
\indent In this paper, we prove the Lih and Wang conjecture for $n=6$ and Dittert conjecture for $n=4$.     
\end{abstract}

\noindent \textbf{Keywords:}Permanents, Doubly Stochastic Matrices, Lih-Wang Conjecture,  $\phi$-maximizing matrix, Dittert's conjecture.\\
\noindent \textbf{AMS Subject Classification}: 15A15. 
\maketitle
\section{Introduction}
\indent The permanent of an $n\times n$ matrix $A=(a_{ij})$  is defined by
\begin{equation*}
\mbox{per}(A)=\sum\limits_{\sigma\in S_n} a_{1\sigma(1)}a_{2\sigma(2)}...a_{n\sigma(n)},
\end{equation*}
where $S_n$ is the symmetric group of degree $n$.\\
\indent Let $\Omega_n$  denote the set of all  doubly stochastic matrices of order $n$ and let $J_n$ be an $n\times n$ matrix with each entry equal to $\frac{1}{n}$.\\
\indent For positive integers $n$ and $k$ with $\left(1\leq k\leq n\right)$, $Q_{k,n}$ denotes the set\\ $\{\left( i_1,..., i_k\right)/ 1\leq i_1 < ...< i_k\leq n \}$.
  For $\alpha, \beta \in Q_{k,n}$, let $A \left(\alpha / \beta \right)$ be the submatrix of  $A$ obtained by deleting the rows indexed by $\alpha$ and columns indexed by $\beta$ and $A[\alpha / \beta]$ be the submatrix of $A$ with rows and columns indexed by $\alpha$ and $\beta$ respectively.\\
\indent For $1\leq k \leq n$, the $k^{th}$ order subpermanent of $A$ is defined by
 \begin{equation*}
\sigma_k(A)=\sum\limits_{\alpha,\beta \in Q_{k,n}}\text{per}\left(A[\alpha / \beta]\right)
\end{equation*}
In this paper, we use the following results quoted by Minc \cite{HMI1978}:\\
 If $A$ and $B$ are two $n\times n$ matrices and $1\leq k\leq n$, then
  \begin{equation*}
  \text{per}(A)=\sum\limits_{\beta \in Q_{k,n}} \text{per}\left(A[\alpha / \beta]\right)\text{per}\left(A\left(\alpha / \beta\right)\right),  \text{for }  \alpha\in Q_{k,n},
  \end{equation*}
   and
  \begin{equation*}
  \text{per}(A+B)=\sum\limits_{k=0}^n  S_k \left(A, B\right),
      \end{equation*}
       where $S_k\left(A, B\right)=\sum\limits_{\alpha,\beta \in Q_{k,n}}\text{per}(A[\alpha / \beta])\text{per}(B\left(\alpha / \beta\right))$,
    $ $per$ \left(A[\alpha / \beta]\right)=1$ when $k=0$ \\ and $ $per$ \left(B\left(\alpha / \beta\right)\right)=1$ when $k=n$.\\
\indent Several authors have considered the problem of finding an upper bound for the permanent of a convex combination of $J_n$ and $A$, where $A\in\Omega_n$. In particular, Marcus and Minc \cite{MMA1967} have proposed a conjecture, which states that if $A\in\Omega_n, n\geq 2$, then per$\left(\frac{nJ_n-A}{n-1}\right)\leq$ per$(A),$ with equality holding if and only if $A=J_n, n\geq 3$.  They established that the conjecture is true for $n=2$, or if $ A$ is a positive semi-definite symmetric matrix, or if $A$ is in a sufficiently small neighbourhood of $J_n$.
    Wang \cite{ETH1977} proved the Marcus - Minc conjecture for $n=3$ with a revised statement of the case of equality and he proposed a conjecture that per$\left(\frac{nJ_n+A}{n+1}\right)\leq$per$(A)$.\\
     \indent Rather than consider a particular convex combination of $A$ with $J_n$, Foregger \cite{THF1979} raised a question  whether
\begin{equation}
  \mbox{per} \left(tJ_n+\left(1-t\right) A\right)\leq \mbox{per}(A)
 \end {equation}
  holds for all $A\in \Omega_n$ and $0<t \leq\frac{n}{n-1}$? He proved that the inequality (1) holds for $0<t \leq\frac{3}{2}$ and $A\in \Omega_3$ with equality if and only if $A=J_3$ or $t=\frac{3}{2}$ and $A=\frac{1}{2}\left(I+P\right)$ \\ (up to permutations of rows and columns), where $P$ is a full cycle permutation matrix. Later, Foregger himself proved in \cite{THF1988} that the inequality (1) holds for $n=4$ and  $t_0<t \leq\frac{4}{3},$  where $t_0$ is approximately equal to 0.28095. \\
 \indent Another conjecture in permanents is due to Holens \cite{FHO1964} and Dokovic \cite{DZD1967}, namely the Holens - Dokovic conjecture, which states that if $A\in\Omega_n $ and $k$ is an integer, $1\leq k\leq n$, then
 \begin{equation}
 \sigma_k(A)\geq\frac{(n-k+1)^2}{kn}\sigma_{k-1}(A).
 \end{equation}
  They proved this conjecture for $k\leq3$.  Kopotun \cite{KAK1994} proved the inequality (2) for $k=4$ and $n\geq 5$. \\
  \indent By Minc \cite{HMI1978}, the inequality $(2)$ is equivalent to the Monotonicity conjecture, which states that if $A$ is any matrix in $\Omega_n$ and $2\leq k \leq n$, then $\sigma_k(X)$ is monotonically increasing on the line segment from $J_n$ to $A$. For $k=n$,  the monotonicity conjecture on permanents is equivalent to 
\begin{equation*}
	\mbox{per}((1-t)J_n+t A)\leq \mbox{per}(A)
\end{equation*} 
for all $A\in\Omega_n$ and $0\leq t \leq 1$ \cite{SGH1991, KAK1996}. Most interestingly, Wanless \cite{IMW1999} disproved the Holens - Dokovic inequality (2).\\

\indent Brualdi and Newman \cite{RAB1965} considered the inequality
 \begin{equation}
   \text{per} \left(\alpha J_n + \left(1-\alpha\right) A \right) \leq \alpha \text{ per}(J_n) + \left(1-\alpha\right)\text{per}(A)
 \end{equation}
  and showed that it did not always hold for all $A\in \Omega_n$ and  for all $\alpha \in [0, 1]$.\\
For $n=3$, Lih and Wang \cite{ETH1982} proved that (3) is true for all $A \in \Omega_n$ and  for all $\alpha \in [\frac{1}{2}, 1]$ and they proposed the following conjecture (quoted by Cheon and Wanless \cite{GSC2005}): The inequality  (3) is true for all $A \in \Omega_n$  and for all $\alpha \in [\frac{1}{2},1]$. For $n=4$, Foregger \cite{THF1988} proved that the inequality $(3)$ holds for all $A\in\Omega_4$ and for all $\alpha \in [0.63, 1]$. Kopotun \cite{KAK1996} extended this inequality $(3)$ to subpermanents. Subramanian and Somasundaram \cite{SSK2016} proved that the inequality $(3)$ holds for $n=4$ and $n=5$ with some conditions on $A$. A detailed survey of conjectures on permanent was given by Gi-Sang Cheon and Wanless \cite{GSC2005} and Fuzhen Zhang \cite{FUAH2016}.  In the next section, we prove that the Lih Wang conjecture is true for $n=4$ and the inequality (3) is valid in $[\beta,1],$ where the value of $\beta$ is about 0.486. Also, we prove the Lih Wang Conjecture for $n=6$ with some conditions on $A$ in the interval [0.7836, 1]. 
The following theorem is due to Foregger \cite{THF1988}.\\
\begin{thm}
Let $A\in\Omega_4$ and $t_2<t\leq1,$ where $t_2$ is the unique real root of the polynomial $106t^3-418t^2+465t-153.$ Then\begin{center}$per(tJ_4+(1-t)A)\leq tper(J_4)+(1-t)perA$\end{center} with equality iff $A=J_4$.
\end{thm}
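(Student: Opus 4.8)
The plan is to convert the inequality into a single polynomial inequality in $t$ whose coefficients are subpermanents of $A$. First I would apply the expansion $\mathrm{per}(X+Y)=\sum_k S_k(X,Y)$ with $X=tJ_4$ and $Y=(1-t)A$; since every $k\times k$ submatrix of $tJ_4$ has permanent $k!\,(t/4)^k$ and $\sum_{\alpha,\beta\in Q_{k,4}}\mathrm{per}(A(\alpha/\beta))=\sigma_{4-k}(A)$, this gives
\begin{equation*}
\mathrm{per}(tJ_4+(1-t)A)=\sum_{j=0}^{4}(4-j)!\Big(\tfrac{t}{4}\Big)^{4-j}(1-t)^{j}\,\sigma_j(A),
\end{equation*}
where $\sigma_0=1$, $\sigma_1=4$, and $\sigma_4=\mathrm{per}(A)$ because $A$ is doubly stochastic. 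Letting $h(t)$ denote the right side minus the left side of the claimed inequality, one checks $h(0)=h(1)=0$, so $h(t)=t(1-t)q(t)$ with $q$ quadratic, and a direct collection of terms yields
\begin{equation*}
q(t)=\tfrac{3}{32}(1+t-3t^{2})+\mathrm{per}(A)(t^{2}-3t+3)-\tfrac{\sigma_2(A)}{8}\,t(1-t)-\tfrac{\sigma_3(A)}{4}(1-t)^{2}.
\end{equation*}
Because $t(1-t)>0$ on $(0,1)$, the theorem is equivalent to proving $q(t)\ge0$ for $t\in(t_2,1]$, with equality only when $A=J_4$.

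Two facts would frame the argument. At the endpoint $t=1$ one computes $q(1)=\mathrm{per}(A)-\mathrm{per}(J_4)$, which is nonnegative by the van der Waerden bound and vanishes exactly at $A=J_4$ (this is the source of the equality case); and at $A=J_4$ one has $q\equiv0$. I would then observe that $J_4$ is a critical point of each $\sigma_k$ on $\Omega_4$: each $\sigma_k$ is invariant under $A\mapsto PAQ$, so its gradient at $J_4$ is invariant, hence a multiple of $J_4$, hence orthogonal to the tangent space of zero row/column-sum matrices. Computing the second-order term of $q$ along a feasible direction $E$ then collapses (using $\mathrm{per}(J_4+\varepsilon E)=\tfrac3{32}+\tfrac{\varepsilon^2}{16}\|E\|_F^2+\cdots$ and the analogous $\tfrac12\|E\|_F^2$ coefficients for $\sigma_2,\sigma_3$) to the constant $q^{(2)}(t)=\tfrac1{16}\|E\|_F^2$, independent of $t$. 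Thus $q_A(t)>0$ for every $t$ once $A$ is near $J_4$, which localizes any possible violation to matrices a bounded distance from $J_4$ and directs the search toward the boundary of $\Omega_4$.

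The decisive and hardest step is the uniform bound $q(t)\ge0$ on $(t_2,1]$ for all $A$. The obstruction is that the coefficients of $\sigma_2(A)$ and $\sigma_3(A)$ in $q$ are negative, so the Holens–Dokovic lower bounds $\sigma_2\ge\tfrac92$, $\sigma_3\ge\tfrac13\sigma_2$ run the wrong way; what is needed is the joint constraint linking $\mathrm{per}(A)$, $\sigma_2(A)$, $\sigma_3(A)$. The plan is to fix $t$ and minimize
\begin{equation*}
\Phi_t(A)=(t^{2}-3t+3)\,\mathrm{per}(A)-\tfrac18 t(1-t)\,\sigma_2(A)-\tfrac14(1-t)^{2}\,\sigma_3(A)
\end{equation*}
over $\Omega_4$, exploiting invariance under row/column permutations and transposition to reduce to finitely many faces of the Birkhoff polytope, on each of which $\Phi_t$ becomes an explicit polynomial in one or two free entries. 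I expect this boundary minimization to single out an extremal matrix $A^{*}$ whose subpermanent profile gives $q_{A^{*}}(t)=\kappa\,(106t^{2}-312t+153)$ with $\kappa<0$; the smaller root of this quadratic is the required threshold $t_2\approx0.6217$, and the cubic in the statement is precisely $(t-1)(106t^{2}-312t+153)=106t^{3}-418t^{2}+465t-153$, the factor $(t-1)$ recording the normalization $q(1)=\mathrm{per}(A)-\mathrm{per}(J_4)$. Locating $A^{*}$ and verifying that it is the global minimizer is the main obstacle; the expansion, factoring, and endpoint analysis preceding it are routine.
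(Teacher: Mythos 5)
A preliminary remark on the comparison you asked for: this paper does \emph{not} prove Theorem 1.1 at all --- it is quoted verbatim from Foregger \cite{THF1988} --- so your attempt can only be measured against Foregger's published argument and against the technique this paper uses for its analogous $n=6$ result (Theorem 2.2). Your preparatory reductions are correct and I verified them: the expansion $\mathrm{per}(tJ_4+(1-t)A)=\sum_{j=0}^{4}(4-j)!\,(t/4)^{4-j}(1-t)^{j}\sigma_j(A)$, the factorization $h(t)=t(1-t)q(t)$ with exactly your $q$, the endpoint identity $q(1)=\mathrm{per}(A)-\tfrac{3}{32}$, the vanishing $q\equiv 0$ at $A=J_4$, and the second-order coefficient $\tfrac{1}{16}\|E\|_F^2$, independent of $t$, along zero-line-sum directions (using $\sum_{i\neq k,\,j\neq l}E_{ij}E_{kl}=\|E\|_F^2$). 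Your factorization $106t^3-418t^2+465t-153=(t-1)(106t^2-312t+153)$ is also exact, and it incidentally exposes an imprecision in the theorem as stated: the cubic has three real roots ($1$, $\approx 0.6217$, $\approx 2.3217$), so $t_2$ is the unique root in $(0,1)$, not ``the unique real root.'' This reduction to a polynomial $q$ in $t$ with subpermanent coefficients is the same first move the paper itself makes in Theorem 2.2, where the difference of the two sides is written as $t(1-t)F_A(t)$.

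The genuine gap is that the entire content of the theorem --- the uniform bound $q_A(t)\geq 0$ for \emph{every} $A\in\Omega_4$ on $(t_2,1]$, which is where the specific cubic comes from --- is left as an unexecuted plan, and the plan as described would fail. Minimizing $\Phi_t$ by ``exploiting invariance \dots to reduce to finitely many faces'' is unjustified: $\Phi_t$ is neither convex nor concave on the Birkhoff polytope, so its minimum cannot be pushed to the boundary, and $\Omega_4$ has faces of every dimension up to $8$, so even on a proper face one is not reduced to ``one or two free entries.'' The expectation that a single extremal $A^{*}$ satisfies $q_{A^{*}}(t)=\kappa(106t^2-312t+153)$ is pure speculation --- no candidate is produced, and in arguments of this type (Foregger's, and the paper's own Theorem 2.2, which chains Theorem 1.2 with Kopotun's inequality $(\sum_j a_{ij}^2)^2\leq\sum_j a_{ij}^3$ and the London--Minc bound $\sum_j a_{ij}^2\geq\tfrac{1}{n}$) the threshold polynomial emerges from concatenating scalar inequalities that bound $\sigma_2(A)$ and $\sigma_3(A)$ \emph{from above} against $\mathrm{per}(A)$, not from identifying an extremal matrix; you correctly diagnose that upper bounds are what is needed, but you supply none, and the naive ones ($\sigma_2\leq 6$, $\sigma_3\leq 4$, $\mathrm{per}\geq\tfrac{3}{32}$) are easily checked to be insufficient on $(t_2,1)$. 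The local positivity near $J_4$ is sound but only qualitative (third-order terms are uncontrolled), so it cannot be combined with compactness without explicit constants; and the equality assertion for $t\in(t_2,1)$ --- that $q_A(t)>0$ for all $A\neq J_4$, not merely $q(1)>0$ via van der Waerden --- is likewise never established. In short: a correct and verifiable reformulation plus a sound heuristic, but the decisive quantitative step that actually produces $106t^2-312t+153$ is missing, and the proposed route to it would not go through as described.
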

He found that the value of $t_2$ is about $0.6216986477375.$\vskip0.5cm The following theorem is due to Subramanian and Somasundaram \cite{SSK2016}
\begin{thm}
	Let $A\in\Omega_n$ and $2\leq k\leq n$. If the polynomial $\sum_{r=2}^{k}r \frac{(k-r)!}{n^{k-r}}{{n-r}\choose{k-r}}^2\sigma_r(A-J_n)t^{r-2}$ has no root in $(0,1)$ then $\sigma_k(A)\geq\frac{(n-k+1)^2}{nk}\sigma_{k-1}(A),k=2,3,...,n.$
	\end{thm}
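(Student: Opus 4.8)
The plan is to derive the Holens--Dokovic inequality $(2)$ from the monotonicity of $\sigma_k$ along the segment from $J_n$ to $A$, which by the equivalence of Minc quoted above is precisely what must be shown. Writing $X(t)=(1-t)J_n+tA=J_n+tB$ with $B=A-J_n$, it suffices to prove that $\frac{d}{dt}\sigma_k\big(X(t)\big)\ge 0$ for all $t\in[0,1]$: this makes $t\mapsto\sigma_k(X(t))$ nondecreasing from $J_n$ to $A$ and hence yields $(2)$.

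First I would expand $\sigma_k(X(t))$ as a polynomial in $t$. Applying the permanent addition formula to each $k\times k$ submatrix $J_n[\alpha/\beta]+tB[\alpha/\beta]$, and using that the permanent of an $s\times s$ block all of whose entries equal $\tfrac1n$ is $s!/n^{s}$, I would group terms according to the order $r$ of the surviving submatrix of $B$. A counting argument shows that each $\mathrm{per}(B[\mu/\nu])$ with $\mu,\nu\in Q_{r,n}$ arises exactly $\binom{n-r}{k-r}^2$ times, giving
\[
\sigma_k(X(t))=\sum_{r=0}^{k}\frac{(k-r)!}{n^{k-r}}\binom{n-r}{k-r}^2\sigma_r(B)\,t^{r}.
\]
Since $A$ and $J_n$ have the same row and column sums, $B$ has zero line sums, so $\sigma_1(B)=0$ and the linear term drops out. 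Differentiating and factoring out $t$ then yields
\[
\frac{d}{dt}\sigma_k(X(t))=t\sum_{r=2}^{k} r\,\frac{(k-r)!}{n^{k-r}}\binom{n-r}{k-r}^2\sigma_r(B)\,t^{r-2}=t\,P(t),
\]
where $P$ is exactly the polynomial appearing in the statement.

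It remains to pin down the sign of $P$ on $(0,1)$, and this is where the hypothesis is used. As $t\ge0$, the derivative is nonnegative precisely when $P(t)\ge0$. The assumption that $P$ has no root in $(0,1)$ forces $P$ to keep one sign on the interval, so it is enough to fix that sign at an endpoint. I would evaluate at $t=0$, where $P(0)=2\,\frac{(k-2)!}{n^{k-2}}\binom{n-2}{k-2}^2\sigma_2(B)$, and invoke the identity
\[
\sigma_2(A-J_n)=\tfrac12\sum_{i,j}\Big(a_{ij}-\tfrac1n\Big)^2\ge 0,
\]
which follows by expanding $\sigma_1(B)^2=0$ and using the vanishing line sums of $B$. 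Hence $P(0)\ge0$, so $P\ge0$ on all of $[0,1]$, the derivative is nonnegative, and $(2)$ follows.

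The main obstacle is exactly this sign determination. Termwise positivity is not available, since $\sigma_r(A-J_n)$ can be negative for $r\ge3$, so one cannot argue coefficient by coefficient; the no-root hypothesis is the precise device that rules out a sign change of $P$, allowing the nonnegativity secured at $t=0$ by $\sigma_2(B)\ge0$ to propagate to the endpoint $t=1$. A short computation with the binomial coefficients in fact shows $\frac{d}{dt}\sigma_k(X(t))\big|_{t=1}=P(1)=k\big(\sigma_k(A)-\tfrac{(n-k+1)^2}{nk}\sigma_{k-1}(A)\big)$, so that $t=1$ is the value directly governing $(2)$; this is the concrete form of Minc's equivalence and makes the deduction self-contained.
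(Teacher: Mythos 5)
Your proposal is correct and follows essentially the same route as the source of this theorem: the paper itself states Theorem 1.2 without proof, citing Subramanian and Somasundaram \cite{SSK2016}, and their argument is precisely your expansion $\sigma_k\bigl((1-t)J_n+tA\bigr)=\sum_{r=0}^{k}\frac{(k-r)!}{n^{k-r}}\binom{n-r}{k-r}^2\sigma_r(A-J_n)t^r$, the observation that the derivative equals $tP(t)$ with $\sigma_1(A-J_n)=0$, positivity at $t=0$ from $\sigma_2(A-J_n)=\frac12\sum_{i,j}(a_{ij}-\frac1n)^2\ge 0$, and the endpoint identity $P(1)=k\sigma_k(A)-\frac{(n-k+1)^2}{n}\sigma_{k-1}(A)$. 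The only gloss is the degenerate case $P(0)=0$, which forces $A=J_n$ and hence equality, so your sign-propagation argument is complete.
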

Another conjecture on permanents is Dittert's conjecture [\cite{HMI1978}, conjecture 28].
 Let $K_n$ denote the set of all $n \times n$ real nonnegative matrices whose entries have sum $n$.  Let $\phi$ denote a real valued function defined on $K_n$ by $\phi(X)=\prod_{i=1}^{n}r_i+\prod_{j=1}^{n}c_j-per(X)$ for $X \in K_n$ with row sum vector $R_{X}=(r_1,r_2,...r_n)$ and column sum vector $C_{X}=(c_1,c_2,...c_n)$. For the same $X$, let $\phi_{ij}(X)=\prod_{k \neq i}r_k+\prod_{l \neq j}c_l-per X(i|j)$.\vskip0.5cm A matrix $A \in K_n$ is called a $\phi$-maximizing matrix on $K_n$ if $\phi(A)\geq\phi(X)$ for all $X\in K_n$. An $n \times n$ is called fully indecomposable if it does not contain an $s \times t$ zero submatrix with $s + t = n.$\vskip0.5cmE. Dittert conjectured that [\cite{HMI1978}, conjecture 28] $J_n$ is the unique $\phi$-maximizing matrix on $K_n$. R.Sinkhorn\cite{RS1984} proved that every $\phi$-maximizing matrix on $K_n$ has a positive permanent and also that the conjecture is true for $n=2$.\\ \indent Suk Geun Hwang\cite{SGH1986, SGH1987} investigated some properties of $\phi$-maximizing matrices. The following Lemmas and theorems are due to SG Hwang \cite{SGH1987}.\\
\begin{lem}
Let $A$ be a $\phi$-maximizing matrix on $K_n$, and let $1\leq s < t \leq n.$ If columns $s$ and $t$ of $A$ have either the same sums or the same (0, 1) patterns, then the matrix obtained from $A$ by replacing each of columns $s$ and $t$ by its average is also a $\phi$-maximizing matrix on $K_n$. A similar statement holds for rows.
\end{lem}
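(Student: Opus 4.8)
The plan is to compare $A$ with its column-averaged matrix directly, exploiting the multilinearity of the permanent and a one-parameter family that passes through both.

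Write $\mathbf{x},\mathbf{y}$ for columns $s,t$ of $A$, with column sums $c_s,c_t$, and let $B$ be $A$ with these two columns interchanged. A column interchange changes neither the permanent, nor $\prod_i r_i$, nor $\prod_j c_j$, so $\phi(B)=\phi(A)$ and $B$ is again $\phi$-maximizing. The averaged matrix $A'$ is precisely $\tfrac12(A+B)$: both of its columns $s,t$ equal $\tfrac12(\mathbf{x}+\mathbf{y})$ while all other columns are unchanged. In particular $A'\in K_n$, so maximality of $A$ already gives $\phi(A')\le\phi(A)$; the entire content is the reverse inequality $\phi(A')\ge\phi(A)$, which then forces $\phi(A')=\phi(A)$ and makes $A'$ $\phi$-maximizing. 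Let $P(\mathbf{p},\mathbf{q})$ denote the permanent of the matrix whose columns $s,t$ are replaced by $\mathbf{p},\mathbf{q}$ (other columns as in $A$); this is bilinear, with $\operatorname{per}A=P(\mathbf{x},\mathbf{y})=P(\mathbf{y},\mathbf{x})$.

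Averaging leaves every row sum fixed, replaces $c_s,c_t$ by their common mean, and affects the permanent only through $P$. Expanding $P$ bilinearly and using the arithmetic--geometric mean gap for the two column sums yields
\begin{equation*}
\phi(A')-\phi(A)=\frac{K}{4}(c_s-c_t)^2-\frac14\,P(\mathbf{z},\mathbf{z}),\qquad \mathbf{z}=\mathbf{x}-\mathbf{y},\ K=\prod_{j\neq s,t}c_j\ge0,
\end{equation*}
where, by Laplace expansion in the two repeated columns,
\begin{equation*}
P(\mathbf{z},\mathbf{z})=2\sum_{i<k} z_i z_k\,\operatorname{per}\!\big(A(\{i,k\}|\{s,t\})\big).
\end{equation*}
So it suffices to prove $P(\mathbf{z},\mathbf{z})\le K(c_s-c_t)^2$. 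Equivalently, set $g(\lambda)=\phi\big((1-\lambda)A+\lambda B\big)$: this is a quadratic in $\lambda$ with $g(0)=g(1)=\phi(A)$, leading coefficient $P(\mathbf{z},\mathbf{z})-K(c_s-c_t)^2$, and $g(\tfrac12)=\phi(A')$. Maximality forces $g\le\phi(A)$ on $[0,1]$, hence the leading coefficient is $\ge0$; the goal is to show it is also $\le0$, whence $g$ is constant and $\phi(A')=\phi(A)$.

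When columns $s,t$ share the same $(0,1)$ pattern, the matrices $(1-\lambda)A+\lambda B$ remain nonnegative for $\lambda$ in an open interval strictly containing $[0,1]$: on the common support each varying entry is positive at both endpoints, and off it every entry stays $0$. Thus $g(\lambda)\le\phi(A)$ persists on a neighbourhood of $0$, so $0$ is a local maximum of the quadratic $g$; a parabola with positive leading coefficient has no local maximum, so the leading coefficient is $\le0$, hence $=0$, giving $\phi(A')=\phi(A)$.

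When columns $s,t$ have equal sums, $(c_s-c_t)^2=0$ and $A,A'$ have identical row- and column-sum vectors, so the claim collapses to the single permanent inequality $\operatorname{per}(A')\le\operatorname{per}(A)$, i.e.\ $P(\mathbf{z},\mathbf{z})\le0$. If the columns also share a support this is the previous case; the essential difficulty is the subcase of equal sums but unequal supports, where the segment $A$--$B$ cannot be extended. Here I would invoke the first-order (Kuhn--Tucker) conditions: $\phi_{ij}(A)$ equals a common constant on the support of $A$ and is no larger off it. Since $c_s=c_t$, the partials $\phi_{is},\phi_{it}$ differ only through the permanent, converting these conditions into sign information on $v_i:=\operatorname{per}(A(i|t))-\operatorname{per}(A(i|s))=\sum_{k}z_k\,\operatorname{per}(A(\{i,k\}|\{s,t\}))$, together with the identity $P(\mathbf{z},\mathbf{z})=\sum_i z_i v_i$. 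Combining this with the second-order optimality conditions along marginal-preserving ``circulation'' variations (shifting mass $t\to s$ in one row and $s\to t$ in another, which keeps all row and column sums fixed) should pin down $P(\mathbf{z},\mathbf{z})=0$. Constructing feasible such variations when the obstructing entries vanish---so that the ascent directions suggested by the signs of the $v_i$ are not directly available---is the step I expect to be the main obstacle. Finally, the statement for rows follows by applying the column case to $A^{T}$, since $\phi(A^{T})=\phi(A)$.
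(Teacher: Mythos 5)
Your proposal has a genuine gap, and it is worth noting first that the paper itself contains no proof to compare against: Lemma 1.1 is quoted from Hwang \cite{SGH1987}, so your attempt stands or falls on its own. What you do prove is correct: the reduction via the column swap $B$, the identity $\phi(A')-\phi(A)=\tfrac{K}{4}(c_s-c_t)^2-\tfrac14 P(\mathbf{z},\mathbf{z})$, and the whole same-pattern case are sound and complete --- extending the segment $(1-\lambda)A+\lambda B$ slightly beyond $[0,1]$ (feasible exactly because the patterns agree) makes $\lambda=0$ an interior local maximum of a quadratic and forces the leading coefficient to vanish. That is the standard argument for this half of the lemma.

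The equal-sums case, however, is not merely ``the step I expect to be the main obstacle''; the route you sketch provably cannot close it. With $c_s=c_t$ one has $\prod_{l\neq s}c_l=\prod_{l\neq t}c_l$, so Lemma 1.2 gives $\phi_{is}(A)-\phi_{it}(A)=v_i$, and running through the sign cases ($a_{is},a_{it}$ both positive $\Rightarrow v_i=0$; $a_{is}>0=a_{it}\Rightarrow z_i>0$ and $v_i\geq0$; $a_{is}=0<a_{it}\Rightarrow z_i<0$ and $v_i\leq0$; both zero $\Rightarrow z_i=0$) yields $z_iv_i\geq0$ for \emph{every} $i$, hence $P(\mathbf{z},\mathbf{z})=\sum_i z_iv_i\geq0$. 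That is exactly the inequality you already had from $\phi(A')\leq\phi(A)$, i.e.\ the first-order conditions reprove the trivial direction and point \emph{away} from the needed $P(\mathbf{z},\mathbf{z})\leq0$. The circulation variations fare no better: the feasible shift of mass $\varepsilon$ at positions $(i,t),(k,s)$ and $-\varepsilon$ at $(i,s),(k,t)$ changes the permanent by $\varepsilon(v_i-v_k)+2\varepsilon^2\operatorname{per}A(\{i,k\}|\{s,t\})$, and the sign information above makes the first-order term nonnegative for every feasible choice, while the second-order term is nonnegative outright; the reverse circulations, which would bound the permanent from above, require decreasing precisely the entries that vanish. More structurally, every consequence of maximality at $A$ has the form $\phi(A+\varepsilon E)\leq\phi(A)$ with $E$ a feasible direction, i.e.\ a \emph{lower} bound on a permanental expression, and the direction $B-A$ is one-sided exactly when the patterns differ; so no first- or second-order variational argument at $A$ of the kind you propose can deliver the upper bound $P(\mathbf{z},\mathbf{z})\leq0$. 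The equal-sums case genuinely requires a different, non-local input, which is what Hwang's original proof supplies; as written, your proof establishes only the same-pattern half of the lemma.
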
 
He also proved that Dittert's conjecture is true for positive semidefinite symmetric matrices in $K_n$ and for matrices in a sufficiently small neighbourhood of $J_n$ in $K_n$. 
\begin{lem}
 Let $A=[a_{ij}]$ be a  $\phi$-maximizing matrix on $K_n$. Then
\begin{itemize}
 \item $\phi_{ij}(A) = \phi_{kl}(A)$ if $a_{ij}>0$ and $a_{kl}>0$.
 \item $\phi_{ij}(A) \leq\phi_{kl}(A)$ if $a_{ij}=0$ and $a_{kl}>0$.
 \end{itemize}
\end{lem}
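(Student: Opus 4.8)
The plan is to recognize $\phi_{ij}(A)$ as the partial derivative of $\phi$ with respect to the entry $a_{ij}$, and then to read off both conclusions from the first-order optimality conditions for maximizing $\phi$ over $K_n$. Observe that $K_n$ is the intersection of the nonnegative orthant with the hyperplane $\sum_{i,j}a_{ij}=n$, and that $\phi$ is a polynomial in the entries, hence smooth; so the perturbation arguments below are legitimate. The whole argument is a Lagrange/KKT computation in disguise, and the only genuinely delicate point is handling the nonnegativity constraint at the boundary.

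First I would compute $\partial\phi/\partial a_{ij}$ explicitly. Writing $r_i=\sum_j a_{ij}$ and $c_j=\sum_i a_{ij}$, the entry $a_{ij}$ appears only in $r_i$ and in $c_j$, so the product rule gives $\partial(\prod_k r_k)/\partial a_{ij}=\prod_{k\neq i}r_k$ and $\partial(\prod_l c_l)/\partial a_{ij}=\prod_{l\neq j}c_l$. By the multilinearity of the permanent (equivalently, its Laplace-type expansion), the coefficient of $a_{ij}$ in $\mathrm{per}(X)$ is $\mathrm{per}(X(i\mid j))$, so $\partial\,\mathrm{per}(X)/\partial a_{ij}=\mathrm{per}(X(i\mid j))$. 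Combining these, $\partial\phi/\partial a_{ij}(X)=\prod_{k\neq i}r_k+\prod_{l\neq j}c_l-\mathrm{per}(X(i\mid j))=\phi_{ij}(X)$, which is exactly the gradient of $\phi$ evaluated entrywise.

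For the first bullet, suppose $a_{ij}>0$ and $a_{kl}>0$. I would transfer mass between these two entries: let $A(\varepsilon)$ agree with $A$ except that its $(i,j)$ entry becomes $a_{ij}+\varepsilon$ and its $(k,l)$ entry becomes $a_{kl}-\varepsilon$. For $|\varepsilon|$ small, both perturbed entries stay positive and the total sum is unchanged, so $A(\varepsilon)\in K_n$ for $\varepsilon$ in a two-sided neighbourhood of $0$. Since $A$ maximizes $\phi$, the smooth function $\varepsilon\mapsto\phi(A(\varepsilon))$ attains an interior maximum at $\varepsilon=0$, so its derivative vanishes there; by the chain rule this derivative equals $\phi_{ij}(A)-\phi_{kl}(A)$, giving $\phi_{ij}(A)=\phi_{kl}(A)$.

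For the second bullet, suppose $a_{ij}=0$ and $a_{kl}>0$, and use the same mass transfer. Now admissibility forces $\varepsilon\geq0$ so that the $(i,j)$ entry remains nonnegative, so $\varepsilon=0$ is a boundary maximum and only the one-sided derivative is constrained: the right-hand derivative of $\phi(A(\varepsilon))$ at $0$ must be $\leq0$. Since this derivative again equals $\phi_{ij}(A)-\phi_{kl}(A)$, we obtain $\phi_{ij}(A)\leq\phi_{kl}(A)$. I expect the main obstacle to be nothing deeper than keeping the perturbed matrix inside $K_n$: it is precisely the nonnegativity constraint that permits a two-sided perturbation in the first case but only a one-sided one in the second, and that asymmetry is exactly what turns the equality into an inequality; everything else is the standard optimality reasoning for a smooth function on a simplex.
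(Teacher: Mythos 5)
Your proof is correct: identifying $\phi_{ij}(A)$ as $\partial\phi/\partial a_{ij}$ and then perturbing along $E_{ij}-E_{kl}$ --- two-sidedly when both entries are positive (forcing the derivative $\phi_{ij}(A)-\phi_{kl}(A)$ to vanish), one-sidedly when $a_{ij}=0$ (forcing it to be $\leq 0$) --- is exactly the standard first-order optimality argument, and you handle the only delicate point, admissibility within $K_n$, correctly. Note that the paper states this lemma without proof, citing Hwang \cite{SGH1987}; your argument is essentially Hwang's original one, so there is no substantive difference to report.
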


\begin{thm}
If $A =[a_{ij}]$ is a $\phi$-maximizing matrix on $K_n$,
 	then $\phi_{ij}(A) = \phi(A)$ if $a_{ij}>0$ and $\phi_{ij}(A) \leq\phi(A)$ if $a_{ij}=0$.
	\end{thm}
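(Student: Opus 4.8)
The plan is to identify $\phi_{ij}(A)$ as the partial derivative of $\phi$ with respect to the entry $a_{ij}$ and then exploit the homogeneity of $\phi$ through Euler's identity. First I would differentiate $\phi$ directly. Writing $r_i=\sum_k a_{ik}$ and $c_j=\sum_k a_{kj}$, each row sum is linear in the entries with $\partial r_i/\partial a_{ij}=1$, and similarly for the column sums; moreover the permanent expands as $\mathrm{per}(A)=\sum_j a_{ij}\,\mathrm{per}\,A(i|j)$, so that $\partial\,\mathrm{per}(A)/\partial a_{ij}=\mathrm{per}\,A(i|j)$. Collecting the three summands of $\phi$ gives
\begin{equation*}
\frac{\partial \phi}{\partial a_{ij}}=\prod_{k\neq i} r_k+\prod_{l\neq j} c_l-\mathrm{per}\,A(i|j)=\phi_{ij}(A),
\end{equation*}
which is the structural identification driving the whole argument.

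Next I would observe that $\phi$ is a homogeneous polynomial of degree $n$ in the $n^2$ entries $a_{ij}$, since each of $\prod_i r_i$, $\prod_j c_j$ and $\mathrm{per}(A)$ is built from $n$ linear factors. Euler's identity for homogeneous functions then gives, as an algebraic identity valid for \emph{every} matrix,
\begin{equation*}
\sum_{i,j} a_{ij}\,\phi_{ij}(A)=n\,\phi(A).
\end{equation*}
This is the only place homogeneity enters, and it makes no reference to the maximizing property of $A$.

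Now I would feed in the preceding Lemma. Since the entries of $A$ sum to $n>0$ (and, by Sinkhorn, $\mathrm{per}(A)>0$), at least one entry is positive; let $c$ denote the common value $\phi_{ij}(A)$ shared by all positions with $a_{ij}>0$, as guaranteed by the first statement of that Lemma. In the Euler sum the terms with $a_{ij}=0$ drop out, so
\begin{equation*}
n\,\phi(A)=\sum_{(i,j)\,:\,a_{ij}>0} a_{ij}\,c=c\sum_{i,j} a_{ij}=c\,n,
\end{equation*}
using that every matrix in $K_n$ has entry sum $n$. Hence $\phi(A)=c$, that is $\phi_{ij}(A)=\phi(A)$ whenever $a_{ij}>0$. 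For a zero position, the second statement of the Lemma yields $\phi_{ij}(A)\le\phi_{kl}(A)=c=\phi(A)$ for any $k,l$ with $a_{kl}>0$, which is exactly the remaining inequality.

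Because the argument is so short, the point to get right is not any estimate but the two preliminary identities. The derivative computation must correctly track all three summands of $\phi$, and the Euler step must use that $\phi$ is globally homogeneous of degree $n$; this is what lets the identity hold even at boundary points of $K_n$ where some entries vanish, sidestepping any concern about differentiating on the boundary of the nonnegative orthant. I expect this boundary bookkeeping, together with confirming that the zero terms genuinely vanish in the Euler sum, to be the only delicate part. Given the preceding Lemma, no optimization argument enters the final deduction at all; the maximizing hypothesis is invoked solely through that Lemma.
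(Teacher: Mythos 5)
Your proof is correct. Note that the paper itself gives no proof of this statement: Theorem 1.3 is quoted from Hwang \cite{SGH1987}, so there is no in-paper argument to compare against; your derivation is in fact the standard one from that reference. The two key identities check out: $\phi_{ij}(A)=\partial\phi/\partial a_{ij}$ (each row/column sum is linear with unit derivative, and $\partial\,\mathrm{per}(A)/\partial a_{ij}=\mathrm{per}\,A(i|j)$ by Laplace expansion along row $i$), and the Euler identity $\sum_{i,j}a_{ij}\phi_{ij}(A)=n\phi(A)$, which holds as a polynomial identity for all of $K_n$, boundary included --- one can even verify it directly, since $\sum_{i,j}a_{ij}\prod_{k\neq i}r_k=n\prod_k r_k$ and $\sum_{i,j}a_{ij}\,\mathrm{per}\,A(i|j)=n\,\mathrm{per}(A)$, so no appeal to differentiability at boundary points is needed. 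Combining this with Lemma 1.2 (the common value $c$ on positive positions) and the entry sum $n$ gives $\phi(A)=c$ exactly as you argue; the appeal to Sinkhorn's positivity of $\mathrm{per}(A)$ is superfluous, since the entry sum $n>0$ already guarantees a positive entry, but it is harmless.
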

\begin{thm} $J_3$ is the unique $\phi$-maximizing matrix on $K_3$.
\end{thm}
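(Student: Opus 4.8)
The plan is to combine compactness, Hwang's first-order conditions, and the averaging lemma. First I would note that $K_3$ is compact and $\phi$ is continuous, so a $\phi$-maximizing matrix $A$ exists, and evaluating at $J_3$ gives $\phi(J_3)=1+1-\tfrac{2}{9}=\tfrac{16}{9}$, so every maximizer satisfies $\phi(A)\geq\tfrac{16}{9}$. Since $\prod r_i$, $\prod c_j$ and $\mathrm{per}$ are symmetric under independent row and column permutations, $\phi$ is invariant under such permutations, so I may permute $A$ freely; by Sinkhorn's result $\mathrm{per}(A)>0$, so after a permutation the main diagonal of $A$ is positive. A useful observation throughout is that $\phi_{ij}(A)$ is exactly the partial derivative $\partial\phi/\partial a_{ij}$, since $\partial(\prod_k r_k)/\partial a_{ij}=\prod_{k\neq i}r_k$, $\partial(\prod_l c_l)/\partial a_{ij}=\prod_{l\neq j}c_l$, and $\partial\,\mathrm{per}(X)/\partial a_{ij}=\mathrm{per}(X(i|j))$; this reading is what makes Hwang's two statements the Karush--Kuhn--Tucker conditions for the problem.

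The central structural step is to show that a maximizer has \emph{all} entries positive, and I expect this boundary analysis to be the main obstacle, since excluding maximizers on the boundary of $K_3$ is precisely where Dittert-type problems are delicate. I would first reduce to a fully indecomposable $A$: a partly decomposable $3\times3$ matrix has, after permutation, a line with a single positive entry, and such matrices can be compared directly against $J_3$ to show their $\phi$-value stays strictly below $\tfrac{16}{9}$. Full indecomposability then forces every row and column to contain at least two positive entries, so the zero set of $A$ is a partial transversal. To remove the remaining transversal zeros I would use the inequality half of Hwang's lemma, $\phi_{ij}(A)\leq\phi_{kl}(A)$ when $a_{ij}=0$ and $a_{kl}>0$, together with $\phi_{kl}(A)=\phi(A)=\tfrac{16}{9}$ at the positive entries: writing these relations out for a transversal zero produces a numerical incompatibility, leaving $A$ entrywise positive.

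Once $A$ is positive, all its rows share the $(0,1)$-pattern $(1,1,1)$, and likewise all its columns, so Hwang's averaging lemma applies to \emph{every} pair of rows and \emph{every} pair of columns while preserving positivity and keeping us inside the (closed) set of maximizers. Iterating the cyclic averaging of pairs of columns, and then of rows, drives $A$ to a symmetrized maximizer $B$ all of whose columns are equal; such a $B$ has the form $b_{ij}=s_i$ with $s_1+s_2+s_3=1$, and a direct computation gives $\mathrm{per}(B)=6\,s_1s_2s_3$, $\prod r_i=27\,s_1s_2s_3$, $\prod c_j=1$, whence $\phi(B)=1+21\,s_1s_2s_3$. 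On the simplex this is uniquely maximized at $s_i=\tfrac13$, so the maximum value is $\tfrac{16}{9}$ and $J_3$ is a maximizer.

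For uniqueness I would return to an arbitrary (now known positive) maximizer $A$ and use the equality half of Hwang's theorem, $\phi_{ij}(A)=\phi(A)$ for every entry. Subtracting the relations for two entries in a common column cancels the column-product term and yields identities such as $r_3(r_2-r_1)=\mathrm{per}(A(1|1))-\mathrm{per}(A(2|1))$, with eight further analogues across common rows and columns. Assembling these forces all row sums and all column sums to coincide, hence each equals $1$, after which a short elimination in the remaining equations pins down $A=J_3$. The algebra at this last stage is routine but lengthy; the genuine difficulty is the positivity step above, after which existence, symmetrization, and the first-order equalities combine to give both the value $\tfrac{16}{9}$ and the uniqueness of $J_3$.
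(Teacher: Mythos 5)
You should first note that the paper itself contains no proof of this theorem: it is quoted as Theorem 1.4 from Hwang \cite{SGH1987}, so the benchmark is Hwang's published argument, whose overall shape (first-order conditions $\phi_{ij}$, the averaging lemma, a boundary case analysis) your outline does parallel. The genuine gap in your proposal is exactly at the point you yourself flag as the main obstacle, and flagging it does not discharge it: both halves of the positivity step are asserted rather than proved. You claim (i) that a partly decomposable $A\in K_3$ satisfies $\phi(A)<\tfrac{16}{9}$ ``by direct comparison,'' and (ii) that a transversal zero in a fully indecomposable maximizer produces ``a numerical incompatibility'' with $\phi_{ij}(A)\leq\phi(A)$ at the zero and $\phi_{kl}(A)=\phi(A)$ at positive entries, but neither computation is exhibited, and these are precisely where all the work in Dittert-type results lives. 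The paper's own Theorem 3.1 is instructive about the cost: merely proving full indecomposability of a maximizer on $K_4$ consumes the entire Section 3 and requires a sequence of explicit constructions ($A_4$, $A_5$, $A_6$, $\tilde{A_\epsilon}$, $\hat{A}$), each tailored to kill one zero pattern. For $K_3$ the analogous analysis is lighter but still must be carried out --- for instance, for (ii) one can average to the symmetric pattern with a single zero at $(1,1)$, reduce to the family with row $(0,u,u)$, column $(0,v,v)^{T}$ and constant block $w$, and then verify directly that $\phi=2u(v+2w)^2+2v(u+2w)^2-4uvw$ stays below $\tfrac{16}{9}$ under $2u+2v+4w=3$; nothing of this kind appears in your sketch. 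As written, the positivity step is a plan, not a proof.

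The remainder of your skeleton is sound, and in one place you undersell your own construction. The symmetrization computation $\phi(B)=1+21\,s_1s_2s_3$ with $\sum s_i=1$ correctly pins the maximum value at $\tfrac{16}{9}$; moreover, since column averaging preserves row sums, the limit $B$ has $3s_i=r_i(A)$, so $s_i=\tfrac13$ already forces every row sum of the original maximizer to equal $1$, and dually for columns --- this makes your vague ``assembling these forces all row sums and all column sums to coincide'' unnecessary. After that the closing elimination really is short and can be made explicit: for a positive doubly stochastic maximizer, $\phi_{ij}(A)=\phi(A)=\tfrac{16}{9}$ gives $\mathrm{per}\,A(i|j)=\tfrac29$ for all nine minors; summing the three minors avoiding rows $i,i'$ yields $\langle \mathrm{row}_i,\mathrm{row}_{i'}\rangle=\tfrac13$ for $i\neq i'$, and since $\mathbf{1}^{T}AA^{T}\mathbf{1}=3$ forces $\sum_i\|\mathrm{row}_i\|^2=1$ while Cauchy--Schwarz gives $\|\mathrm{row}_i\|^2\geq\tfrac13$, equality holds throughout and $A=J_3$. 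So the tail of the argument is correct and completable; the head --- excluding zeros in a maximizer --- is the missing proof.
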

He  \cite{SGH1987} stated that one can at lest prove that if $A$ is a $\phi$ maximizing matrix on $K_n$ then $A$ is fully indecomposable. Based on his remark we prove that if $A$ is a $\phi$-maximizing matrix on $K_4$ then $A$ is fully indecomposable.\\
Cheon and Wanless \cite{CW2012} proved that if $A \in K_n$ is partly decomposable then $\phi(A)<\phi(J_n)$ and that if the zeroes in $A \in K_n$ form a block then $A$ is not a $\phi$-maximizing matrix. Cheon and Yoon \cite{CY2006} obtained some sufficient conditions for which the Dittert conjecture holds.

\section{Lih Wang conjecture}
In this section, first we prove the Lih Wang conjecture completely for $n=4$. We improved the results of Theorem 1.1 and Theorem 1.2. Later in Theorem 2.2, we prove this conjecture partially for $n=6$.
\begin{thm}
If $A\in\Omega_4$ then $per(\alpha J_4+(1-\alpha)A)\leq\alpha per(J_4)+(1-\alpha)per A$  for all $\alpha \in [t,1]$  where the value of $t$ is about $0.485.$
\end{thm}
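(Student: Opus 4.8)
The plan is to reduce the problem to a single-variable polynomial inequality in $\alpha$ and then locate its relevant root. Writing $A - J_4 = B$, the matrix $B$ is a real matrix all of whose row sums and column sums are zero, so $\sigma_1(B) = 0$. Using the expansion for the permanent of a sum quoted in the introduction, I would write
\begin{equation*}
\mbox{per}(\alpha J_4 + (1-\alpha)A) = \mbox{per}\bigl(J_4 + (1-\alpha)B\bigr) = \sum_{k=0}^{4} (1-\alpha)^k S_k(B, J_4),
\end{equation*}
where $S_k(B,J_4) = \sum_{\alpha',\beta' \in Q_{k,4}} \mbox{per}(B[\alpha'/\beta'])\,\mbox{per}(J_4(\alpha'/\beta'))$. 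Because every entry of $J_4$ equals $\tfrac14$, each factor $\mbox{per}(J_4(\alpha'/\beta'))$ is a fixed constant $(4-k)!/4^{\,4-k}$, so $S_k(B,J_4)$ is a constant multiple of $\sigma_k(B)$. The term $k=0$ gives $\mbox{per}(J_4)$ and the term $k=1$ vanishes since $\sigma_1(B)=0$. Thus the left-hand side collapses to $\mbox{per}(J_4) + \sum_{k=2}^{4} c_k (1-\alpha)^k \sigma_k(B)$ for explicit constants $c_k$.

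Next I would handle the right-hand side. Since $\mbox{per}(A) = \mbox{per}(J_4 + B) = \sum_{k=0}^4 S_k(B,J_4) = \mbox{per}(J_4) + \sum_{k=2}^4 c_k \sigma_k(B)$ by the same expansion at $\alpha = 0$, the target inequality
\begin{equation*}
\mbox{per}(\alpha J_4 + (1-\alpha)A) \leq \alpha\,\mbox{per}(J_4) + (1-\alpha)\,\mbox{per}(A)
\end{equation*}
becomes, after subtracting $\mbox{per}(J_4)$ from both sides and cancelling,
\begin{equation*}
\sum_{k=2}^{4} c_k \bigl[(1-\alpha)^k - (1-\alpha)\bigr]\sigma_k(B) \leq 0.
\end{equation*}
Setting $s = 1-\alpha \in [0, 1-t]$, each bracket $s^k - s = s(s^{k-1}-1)$ is $\le 0$ on $[0,1]$, so the inequality would follow immediately \emph{if all $\sigma_k(B)$ were nonnegative} — but they need not be. The real content is to control the signs and magnitudes of $\sigma_2(B)$, $\sigma_3(B)$, $\sigma_4(B)$ against one another; the worst case is where a negative $\sigma_k(B)$ with an unfavorable sign of the bracket dominates.

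I would therefore factor out $s(1-s)$ and reduce to showing that a cubic-type expression in $s$ with coefficients built from $\sigma_2(B), \sigma_3(B), \sigma_4(B)$ stays of the correct sign on the relevant interval. To bound the subpermanents, I would parametrize $B$ using the doubly-stochastic constraint: the entries of $A$ satisfy $0 \le a_{ij} \le 1$ with all row and column sums equal to $1$, which forces $\sigma_k(B)$ into a bounded region. The cleanest route is to express $\sigma_2(B), \sigma_3(B), \sigma_4(B)$ in terms of the entries of $A$ and invoke the structure of the Birkhoff polytope, extremizing over its vertices (permutation matrices) and edges; equivalently, one can use the known monotonicity-type estimates relating consecutive $\sigma_k$. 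The threshold value $t \approx 0.485$ should emerge precisely as the largest root in $(0,1)$ of the resulting polynomial obtained by taking the extremal configuration of $B$, i.e. the point where the sign-controlling polynomial in $s = 1-\alpha$ first changes sign.

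The main obstacle I expect is the sign analysis in the last step: unlike the crude argument that works when all $\sigma_k(B) \ge 0$, here $\sigma_3(B)$ and $\sigma_4(B)$ can be negative, and one must prove that the combination $\sum_{k=2}^4 c_k(s^k - s)\sigma_k(B)$ never becomes positive on $s \in [0, 1-t]$. This requires sharp two-sided bounds on the individual $\sigma_k(B)$ together with a careful identification of the extremal matrix $A$ that saturates the inequality, which is where the specific numerical threshold $t \approx 0.485$ (improving on Foregger's $0.6216$ and on the conjectured $0.5$) is pinned down. I would expect the extremal $A$ to be attained at or near a convex combination involving the permutation matrix $\tfrac12(I+P)$ that already appeared as the equality case in Foregger's earlier work, so verifying that this candidate is genuinely the worst case — rather than some interior point of $\Omega_4$ — is the crux of the argument.
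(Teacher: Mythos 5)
Your reduction is correct and clean: writing $B=A-J_4$, using $\alpha J_4+(1-\alpha)A=J_4+(1-\alpha)B$, and expanding via $S_k$ with the observation that $\mathrm{per}\bigl(J_4(\alpha'/\beta')\bigr)=(4-k)!/4^{4-k}$ and $\sigma_1(B)=0$ does collapse the conjecture to $\sum_{k=2}^{4}c_k\bigl[(1-\alpha)^k-(1-\alpha)\bigr]\sigma_k(B)\leq 0$, which is the same quartic in $\alpha$ that the paper reaches by a much longer route (the Eberlein--Mudholkar formula expressed through elementary symmetric functions $e_r$ of the column sums and pairwise column sums of $A$). But from that point on your proposal is a plan, not a proof, and the plan's central tool is invalid: the functional $\sum_{k=2}^4 c_k(s^k-s)\sigma_k(A-J_4)$ is, for fixed $s$, a polynomial of degree $4$ in the entries of $A$, not a linear one, so its extrema over the Birkhoff polytope $\Omega_4$ need not occur at vertices or on edges, and "extremizing over permutation matrices" proves nothing. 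Likewise, the "known monotonicity-type estimates relating consecutive $\sigma_k$" you want to invoke are not available in usable form --- the Holens--Dokovic inequality was disproved in general (Wanless, cited in this very paper), and no two-sided bounds on $\sigma_2(B),\sigma_3(B),\sigma_4(B)$ sharp enough to pin down a threshold near $0.485$ are stated or derived in your sketch. Your guess that the extremal $A$ sits near $\tfrac12(I+P)$ is imported from Foregger's equality case for a \emph{different} inequality and is unsupported here. So the specific numerical threshold, which is the entire content of the theorem beyond the routine expansion, is never actually obtained.

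For comparison, the paper does not complete this step analytically either: after deriving the explicit quartic $f(\alpha)$ with coefficients in $\sum_i e_2(a_i)$, $\sum_i e_3(a_i)$, $\sum_i e_4(a_i)$ and $\sum_{T_2(A)}e_4(\mathbf{x})$, it minimizes $f$ numerically (MATLAB) over $\Omega_4$ on subintervals of $[0,1]$ and reads the threshold off a table (minimum $0.1250$ on $[0.5,1]$, minimum $0.0039$ on $[0.486,1]$, minimum $-0.0044$ on $[0.485,1]$ --- note their own table actually supports $0.486$, not the $0.485$ in the theorem statement). So your algebraic reduction is equivalent to, and tidier than, the paper's, but to match even the paper's level of rigor you would need to carry out a global minimization of the quartic over the full polytope $\Omega_4$ (numerically, as they do, or by a genuine analysis of interior critical points), rather than the vertex/edge inspection you propose.
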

\begin{proof}
Let $f(\alpha) = \alpha$ per$(J_4)+(1-\alpha)$per$A$ - per$(\alpha J_4+(1-\alpha)A)$.\\
As we know that per$(J_4)=\frac{3}{32}$, \\
\begin{equation} 
f(\alpha)=3\alpha+32(1-\alpha) \text{per} (A)-32 \text{per}(\alpha J_4+(1-\alpha)A)\geq 0.
\end{equation} \\
Let $a_i$ denote the $i^{th}$ column of $A$ and  $e_r(\textbf{x})=e_r(x_1,x_2,...x_n)$ denote the $r^{th}$ elementary symmetric function of $\textbf{x}=(x_1,x_2...x_n)$. That is, the sum of products of components taken $r$ at a time, $r=1,2,...n$. Let $T_r(A)$ denote the set of ${n}\choose{r}$  sums of columns of $A$ taken $r$ at a time. That is, let $T_r(A)=\{\textbf{x}=a_{i_1}+a_{i_2}+...+a_{i_r}|(i_1,i_2,...,i_r)$ is a $r$-subset of $(1,2,...n)\},$\\ then \\$$\text{per}A=\sum_{T_n(A)}e_n(\textbf{x})-\sum_{T_{n-1}(A)}e_n(\textbf{x})+...+(-1)^{n-1}\sum_{T_1(A)}e_n(\textbf{x}).$$\\
When $A$ is doubly stochastic we have $$\sum_{T_n(A)}e_n(\textbf{x})=1.$$

Let $B=\alpha J_4+(1-\alpha)A$. We use the formula of Eberlein and Mudholkar (\cite{EGM1968}, page 392) to calculate permanent of $B$.
\begin{equation*} \text{per}(B)=\frac{-1}{3}+\frac{1}{9}\sum_{T_1( B)}(-4e_2+9e_3-18e_4)(x)+\frac{1}{18}\sum_{T_2(
B)}(4e_2-9e_3+18e_4)(\textbf{x}).
\end{equation*}
The elements of $T_1(B)$ are the columns of $B$. Let $b_i$ denote the $i^{th}$ column of $B$. Then $b_i$ is of the form $b_i=\alpha\left[ 
{\begin{array}{c}\frac{1}{4}\\\frac{1}{4}\\\frac{1}{4}\\ \frac{1}{4} \end{array} }\right]+(1-\alpha)\left[ {\begin{array}{c}a_{1i}\\a_{2i}\\a_{3i}\\a_{4i}\\\end{array} } \right]$,  where $\left[{\begin{array}{c}a_{1i}\\a_{2i}\\a_{3i}\\a_{4i}\\\end{array}}\right]$ is the $i^{th}$ column of $A$.\\

\noindent Now, $e_2(b_i)=e_2 \left[\begin{array}{c}\frac{\alpha}{4}+(1-\alpha)a_{1i}\\ \frac{\alpha}{4}+(1-\alpha)a_{2i}\\ \frac{\alpha}{4}+(1-\alpha)a_{3i}\\ \frac{\alpha}{4}+(1-\alpha)a_{4i}\\ \end{array}  \right] \\\\
=[\frac{\alpha}{4}+(1-\alpha)a_{1i}][\frac{\alpha}{4}+(1-\alpha)a_{2i}]+[\frac{\alpha}{4}+(1-\alpha)a_{1i}][\frac{\alpha}{4}+(1-\alpha)a_{3i}]+...\\\\=\frac{3}{8}\alpha^2+\frac{3}{4}\alpha(1-\alpha)+(1-\alpha)^2e_2(a_i).$\\

\noindent $e_3(b_i)=[\frac{\alpha}{4}+(1-\alpha)a_{1i}][\frac{\alpha}{4}+(1-\alpha)a_{2i}][\frac{\alpha}{4}+(1-\alpha)a_{3i}]+[\frac{\alpha}{4}+(1-\alpha)a_{1i}][\frac{\alpha}{4}+(1-\alpha)a_{2i}] [\frac{\alpha}{4}+(1-\alpha)a_{4i}]+...\\\\
=\frac{\alpha^3}{16}+3\frac{\alpha^2(1-\alpha)}{16}+\frac{\alpha(1-\alpha)^2e_2(a_i)}{2}+(1-\alpha)^3e_3(a_i).$\\

\noindent Similarly, $e_4(b_i)=[\frac{\alpha}{4}+(1-\alpha)a_{1i}][\frac{\alpha}{4}+(1-\alpha)a_{2i}][\frac{\alpha}{4}+(1-\alpha)a_{3i}][\frac{\alpha}{4}+(1-\alpha)a_{4i}]\\\\
=\frac{\alpha^4}{256}+\frac{\alpha^3(1-\alpha)}{64}+\frac{\alpha^2(1-\alpha)^2e_2(a_i)}{16}+\frac{\alpha(1-\alpha)^3e_3(a_i)}{4}+(1-\alpha)^4e_4(a_i).$\\\\If $\textbf{x}\in T_2(B)$, then $\textbf{x}$ is of the form $\textbf{x}=\left[ {\begin{array}{c}\frac{\alpha}{2}+(1-\alpha)(a_{1i}+a_{1j})\\\frac{\alpha}{2}+(1-\alpha)(a_{2i}+a_{2j})\\\frac{\alpha}{2}+(1-\alpha)(a_{3i}+a{3j})\\\frac{\alpha}{2}+(1-\alpha)(a_{4i}+a_{4j})\\\end{array} } \right].$\\

\noindent Then $e_2(\textbf{x})=[\frac{\alpha}{2}+(1-\alpha)(a_{1i}+a_{1j})][\frac{\alpha}{2}+(1-\alpha)(a_{2i}+a_{2j})]+[\frac{\alpha}{2}+(1-\alpha)(a_{1i}+a_{1j})][\frac{\alpha}{2}+(1-\alpha)(a_{3i}+a_{3j})]+...\\\\=\frac{3}{2}{\alpha^2}+3\alpha(1-\alpha)+(1-\alpha)^2e_2(\textbf{y})$, where $\textbf{y}\in T_2(A)$.
\\\\
$e_3(\textbf{x})=[\frac{\alpha}{2}+(1-\alpha)(a_{1i}+a_{1j})][\frac{\alpha}{2}+(1-\alpha)(a_{2i}+a_{2j})][\frac{\alpha}{2}+(1-\alpha)(a_{3i}+a_{3j})]+[\frac{\alpha}{2}+(1-\alpha)(a_{1i}+a_{1j})][\frac{\alpha}{2}+(1-\alpha)(a_{2i}+a_{2j})][\frac{\alpha}{2}+(1-\alpha)(a_{4i}+a_{4j})]+...$\\\\=$\frac{\alpha^3}{2}+\frac{3}{2}\alpha^2(1-\alpha)+\alpha(1-\alpha)^2e_2(\textbf{y})+(1-\alpha)^3e_3(\textbf{y})$, where $\textbf{y}\in T_2(A)$.\\\\
$e_4(\textbf{x})=[\frac{\alpha}{2}+(1-\alpha)(a_{1i}+a_{1j})][\frac{\alpha}{2}+(1-\alpha)(a_{2i}+a_{2j})][\frac{\alpha}{2}+(1-\alpha)(a_{3i}+a_{3j})][\frac{\alpha}{2}+(1-\alpha)(a_{4i}+a_{4j})]$\\\\$=\frac{\alpha^4}{16}+\frac{(\alpha^3)(1-\alpha)}{4}+\frac{1}{4}\alpha^2(1-\alpha)^2e_2(\textbf{y})+\frac{1}{2}\alpha(1-\alpha)^3e_3(\textbf{y})+(1-\alpha)^4e_4(\textbf{y})$, where $\textbf{y}\in T_2(A)$.\\\\

\noindent Therefore,\\
$$\sum_{i=1}^{4}e_2(b_i)=\frac{3}{2}\alpha^2+3\alpha(1-\alpha)+(1-\alpha)^2\sum_{i=1}^{4}e_2(a_i),$$\\
$$\sum_{i=1}^{4}e_3(b_i)=\frac{\alpha^3}{4}+\frac{3}{4}\alpha^2(1-\alpha)+\frac{1}{2}\alpha(1-\alpha)^2\sum_{i=1}^{4}e_2(a_i)+(1-\alpha)^3\sum_{i=1}^{4}e_3(a_i) $$\\ and  
$$\sum_{i=1}^{4}e_4(b_i)=\frac{1}{64}\alpha^4+\frac{1}{16}\alpha^3(1-\alpha)+\frac{1}{16}\alpha^2(1-\alpha)^2\sum_{i=1}^{4}e_2(a_i)+\frac{1}{4}\alpha(1-\alpha)^3\sum_{i=1}^{4}e_3(a_i)+(1-\alpha)^4\sum_{i=1}^{4}e_4(a_i).$$\\
$$\sum_{T_2(B)}e_{2}(\textbf{x})=18\alpha-9\alpha^2+(1-\alpha)^2\sum_{T_2(A)}e_2(\textbf{y}),$$\\
$$\sum_{T_2(B)}e_{3}(\textbf{x})=-6\alpha^3+9\alpha^2+\alpha(1-\alpha)^2\sum_{T_2(A)}e_2(\textbf{y})+(1-\alpha)^3\sum_{T_2(A)}e_3(\textbf{y}) $$\\ and
$$\sum_{T_2(B)}e_{4}(\textbf{x})=\frac{3\alpha^3}{2}-\frac{9\alpha^4}{8}+\alpha(1-\alpha)^2\sum_{T_2(A)}e_2(\textbf{y})+(1-\alpha)^3\sum_{T_2(A)}e_3(\textbf{y}).$$\\\\

\noindent Now, we use the formula of Eberlein and Mudholkar (\cite{EGM1968}, equations (3.4) and (3.5)) for any $A\in\Omega_4$, $$\sum_{T_2(A)}e_2(\textbf{x})=2\sum_{T_1(A)}e_2(\textbf{x})+6,$$\\
$$\sum_{T_2(A)}e_3(\textbf{x})=2\sum_{T_1(A)}e_2(\textbf{x}).$$
\vskip0.2cm \noindent Using the above two equations, we get $$\sum_{T_2(B)}e_{2}(\textbf{x})=18\alpha-9\alpha^2+(2-4\alpha+2\alpha^2)\sum_{i=1}^{4}e_{2}(a_i)+6-12\alpha+6\alpha^2,$$\\
$$\sum_{T_2(B)}e_{3}(\textbf{x})=-6\alpha^3+9\alpha^2+(\alpha-2\alpha^2+\alpha^3)(2\sum_{i=1}^{4}e_{2}(a_i)+6)+(2-6\alpha+6\alpha^2-2\alpha^3)\sum_{i=1}^{4}e_{2}(a_i)$$ \\ and $$ \sum_{T_2(B)}e_{4}(\textbf{x})=\frac{3\alpha^3}{2}-\frac{9\alpha^4}{8}+(\frac{\alpha^2-2\alpha^3+\alpha^4}{4})(2\sum_{i=1}^{4}e_2(a_i)+6) + $$ \\ $$ (\alpha-3\alpha^2+3\alpha^3-\alpha^4)\sum_{i=1}^{4}e_2(a_i)+(1-\alpha)^4\sum_{T_2(A)}e_4(\textbf{y}).$$
\\
Therefore $$\text{per}(B)=\frac{-1}{3}+\frac{1}{9}(-4(\frac{3\alpha^2}{2}+3\alpha(1-\alpha)+(1-\alpha)^2\sum_{i=1}^{4}e_2(a_i))$$\\
$$+9(\frac{\alpha^3}{4}+\frac{3\alpha^2(1-\alpha)}{4}+\frac{\alpha(1-\alpha)^2}{2}\sum_{i=1}^{4}e_2(a_i)+(1-\alpha)^3\sum_{i=1}^{4}e_3(a_i))$$\\$$-18(\frac{\alpha^4}{64}+\frac{\alpha^3(1-\alpha)}{16}+\frac{\alpha^2(1-\alpha)^2}{16}\sum_{i=1}^{4}e_2(a_i)+\frac{\alpha(1-\alpha)^3}{4}\sum_{i=1}^{4}e_3(a_i)+(1-\alpha)^4\sum_{i=1}^{4}e_4(a_i)))$$\\$$+\frac{1}{18}(4(18\alpha-9\alpha^2+(2-4\alpha+2\alpha^2)\sum_{i=1}^{4}e_2(a_i)+6-12\alpha+6\alpha^2)$$\\$$-9(-6\alpha^3+9\alpha^2+(2\alpha-4\alpha^2+2\alpha^3)\sum_{i=1}^{4}e_2(a_i)+6\alpha-12\alpha^2+6\alpha^3+(2-6\alpha+6\alpha^2$$\\$$-2\alpha^3)\sum_{i=1}^{4}e_2(a_i))+18(\frac{3\alpha^3}{2}-\frac{9\alpha^4}{8}+\frac{\alpha^2-2\alpha^3+\alpha^4}{2}\sum_{i=1}^{4}e_2(a_i)+\frac{3(\alpha^2-2\alpha^3+\alpha^4)}{2}$$\\$$+(\alpha-3\alpha^2+3\alpha^3-\alpha^4)\sum_{i=1}^{4}e_2(a_i)+(1-\alpha)^4\sum_{T_2(A)}e_4(\textbf{y}))).$$\\
Now, $$f(\alpha)=3\alpha+32(1-\alpha)(\frac{-1}{3}+\frac{1}{9}\sum_{T_1(A)}(-4e_2+9e_3-18e_4)(\textbf{x})$$\\$$+\frac{1}{18}\sum_{T_2(A)}(4e_2-9e_3+18e_4)(\textbf{x}))$$\\$$-32(\frac{-1}{3}+\frac{1}{9}(-4(\frac{3\alpha^2}{2}+3\alpha(1-\alpha)+(1-\alpha)^2\sum_{i=1}^{4}e_2(a_i))$$\\ $$+9(\frac{\alpha^3}{4}+\frac{3\alpha^2(1-\alpha)}{4}+\frac{\alpha(1-\alpha)^2}{2}\sum_{i=1}^{4}e_2(a_i)+(1-\alpha)^3\sum_{i=1}^{4}e_3(a_i))$$\\$$-18(\frac{\alpha^4}{64}+\frac{\alpha^3(1-\alpha)}{16}+\frac{\alpha^2(1-\alpha)^2}{16}\sum_{i=1}^{4}e_2(a_i)+\frac{\alpha(1-\alpha)^3}{4}\sum_{i=1}^{4}e_3(a_i)+(1-\alpha)^4\sum_{i=1}^{4}e_4(a_i)))$$\\$$+\frac{1}{18}(4(18\alpha-9\alpha^2+(2-4\alpha+2\alpha^2)\sum_{i=1}^{4}e_2(a_i)+6-12\alpha+6\alpha^2)$$\\$$-9(-6\alpha^3+9\alpha^2+(2\alpha-4\alpha^2+2\alpha^3)\sum_{i=1}^{4}e_2(a_i)+6\alpha-12\alpha^2+6\alpha^3+(2-6\alpha+6\alpha^2$$\\$$-2\alpha^3)\sum_{i=1}^{4}e_2(a_i))+18(\frac{3\alpha^3}{2}-\frac{9\alpha^4}{8}+\frac{\alpha^2-2\alpha^3+\alpha^4}{2}\sum_{i=1}^{4}e_2(a_i)+\frac{3(\alpha^2-2\alpha^3+\alpha^4)}{2}$$\\$$+(\alpha-3\alpha^2+3\alpha^3-\alpha^4)\sum_{i=1}^{4}e_2(a_i)+(1-\alpha)^4\sum_{T_2(A)}e_4(\textbf{y})))).$$\\

After simplifying we get a $4^{th}$ degree polynomial in $\alpha$, \\$$f(\alpha)=\frac{128}{3}+\alpha[67-80\sum_ {i=1}^{4}e_2(a_i)+80\sum_{i=1}^{4}e_3(a_i)-192\sum_{i=1}^{4}e_4(a_i)+96\sum_{T_2(A)}e_4(x)]$$\\$$+\alpha^2[-120+148\sum_{i=1}^{4}e_2(a_i)-144\sum_{i=1}^{4}e_3(a_i)+384\sum_{i=1}^{4}e_4(a_i)-192\sum_{T_2(A)}e_4(x)]$$\\$$+\alpha^3[68-88\sum_{i=1}^{4}e_2(a_i)+80\sum_{i=1}^{4}e_3(a_i)-256\sum_{i=1}^{4}e_4(a_i)+128\sum_{T_2(A)}e_4(x)]$$\\$$+\alpha^4[-15+24\sum_{i=1}^{4}e_2(a_i)-16\sum_{i=1}^{4}e_3(a_i)+64\sum_{i=1}^{4}e_4(a_i)-32\sum_{T_2(A)}e_4(x)].$$\\\\
Using the MATLAB, we obtained the minimum values of the function $f(\alpha)$ for different values of $\alpha$.  We obtained  minimum values of $f(\alpha)$ in different sub intervals of [0, 1] and the values are shown in table 1. \\
\begin{table}[h!]
\centering
\caption{Minimum value}
\begin{tabular} {| c | c| c | }
\hline
    Interval      & Minimum at  $\alpha$    & Minumum value $f(\alpha)$       \\ \hline
    [0.5, 1]       & 0.5                   & 0.1250                         \\ \hline
		
		[0.4, 1]       & 0.4                   & -0.5472          \\ \hline

    [0.45, 1]      & 0.45                  & -0.2654            \\ \hline

    [0.48, 1]     & 0.48                   & -0.0452     \\ \hline
		
		[0.485, 1]     & 0.45                   & -0.0044     \\ \hline
		
		[0.486, 1]     & 0.486                   & 0.0039     \\ \hline
		
		[0, 1]         & 0.2545                  & -0.8675    \\ \hline
		
\end{tabular}

\label{table:Minimum value}
\end{table}\\
Therefore, the function $f(\alpha) \geq 0$ in $[0.5,1].$ Hence, Lih-Wang conjecture is true for $n=4$ and in particular the inequality (3) is true in [0.486, 1].
\end{proof}

\begin{thm}
	If $A$ is a matrix in $\Omega_6$ such that the polynomials \\
	$\sum_{r=2}^{5}r\frac{(5-r)!}{6^{5-r}}{{6-r}\choose{5-r}}^2\sigma_r(A-J_6)t^{r-2}$ and  $\sum_{r=2}^{6}r\frac{(6-r)!}{6^{6-r}}\sigma_r(A-J_6)t^{r-2}$ have no roots in $(0, 1)$ then per$(tJ_6+(1-t)A)\leq t$ per$J_6+(1-t)$per$A$ for all $t\in [0.7836, 1].$
\end{thm}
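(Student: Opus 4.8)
\section*{Proof proposal}

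The plan is to turn the stated inequality into an explicit polynomial inequality in $t$ and then to feed in the two hypotheses through Theorem 1.2. First I would expand the permanent of the convex combination by Minc's formula $\text{per}(X+Y)=\sum_{k=0}^{n}S_k(X,Y)$ with $X=tJ_6$ and $Y=(1-t)A$. A $k\times k$ block of $J_6$ has permanent $k!/6^{k}$, and summing the complementary blocks of $A$ over all index sets produces $\sigma_{6-k}(A)$, so that
\begin{equation*}
\text{per}(tJ_6+(1-t)A)=\sum_{j=0}^{6}t^{6-j}(1-t)^{j}\,\frac{(6-j)!}{6^{6-j}}\,\sigma_j(A),\qquad \sigma_0(A)=1,\ \sigma_1(A)=6 .
\end{equation*}
Setting $F(t)=t\,\text{per}(J_6)+(1-t)\,\text{per}(A)-\text{per}(tJ_6+(1-t)A)$, the endpoints give $F(0)=F(1)=0$ automatically, and $F$ is a polynomial of degree at most $6$ in $t$ whose coefficients are linear in $\sigma_2(A),\dots,\sigma_6(A)$. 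The same computation run with $u=1-t$ and $C=A-J_6$ yields $\text{per}(J_6+uC)=\text{per}(J_6)+\sum_{k=2}^{6}u^{k}\frac{(6-k)!}{6^{6-k}}\sigma_k(C)$, which is the form in which the polynomials of the hypothesis arise: they are exactly the monotonicity (derivative) polynomials of $\sigma_5$ and $\sigma_6$ along the segment $J_6\to A$.

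Next I would invoke Theorem 1.2. Its hypothesis for $(n,k)=(6,5)$ is precisely the first displayed polynomial, and for $(n,k)=(6,6)$ (where $\binom{0}{0}=1$) it is the second; the assumed absence of roots in $(0,1)$ therefore gives
\begin{equation*}
\sigma_5(A)\ge\tfrac{2}{15}\,\sigma_4(A),\qquad \text{per}(A)=\sigma_6(A)\ge\tfrac{1}{36}\,\sigma_5(A).
\end{equation*}
These are the two top instances of the Holens--Dokovic inequality for $n=6$. The lower instances are available unconditionally: Holens and Dokovic give $\sigma_2(A)\ge\frac{25}{12}\sigma_1(A)=\frac{25}{2}$ and $\sigma_3(A)\ge\frac{8}{9}\sigma_2(A)$, and Kopotun's theorem (valid for $n\ge5$) gives $\sigma_4(A)\ge\frac{3}{8}\sigma_3(A)$. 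Thus under the hypotheses the full chain $\sigma_k(A)\ge\frac{(7-k)^2}{6k}\sigma_{k-1}(A)$ holds for $2\le k\le6$, and one checks that when all these inequalities are equalities the values reproduce $\sigma_k(J_6)$.

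Finally I would combine the chain with the exact expansion of $F$. The coefficient of $\sigma_6$ in $F(t)$ is $(1-t)-(1-t)^6$, which is nonnegative on $[0,1]$; hence replacing $\sigma_6$ by its admissible minimum $\frac{1}{36}\sigma_5$ can only decrease $F$, giving a lower bound $F(t)\ge\widetilde F(t)$. For $t$ in a left-neighbourhood of $1$ this substitution makes the new coefficient of $\sigma_5$ nonnegative, so one may again replace $\sigma_5$ by $\frac{2}{15}\sigma_4$, and so on down the chain $\sigma_4\to\frac{3}{8}\sigma_3\to\frac{8}{9}\sigma_2\to\frac{25}{2}$. Carrying the cascade to the bottom reduces $F(t)$ to a single explicit polynomial in $t$, namely the value of $F$ at $A=J_6$, which is identically $0$; consequently $F(t)\ge0$ on any interval of $t$ on which every coefficient produced along the cascade is nonnegative. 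I would determine that interval numerically, exactly as in the table of Theorem 2.1, and identify $0.7836$ as the left endpoint below which one of the reduced coefficients first turns negative. The main obstacle is precisely this reduction: the Holens--Dokovic inequalities furnish only lower bounds on $\sigma_2,\dots,\sigma_5$, whereas in $F(t)$ these enter with negative coefficients, so a direct linear-programming bound over the feasible $\sigma$-vectors is unbounded below. The difficulty is therefore to show that the successive substitutions keep the sign under control throughout $[0.7836,1]$, so that the extremal all-equalities (i.e.\ $J_6$) configuration is the true minimiser of $F$ there, and to certify the threshold numerically.
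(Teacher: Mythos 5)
Your setup and the first half of your cascade coincide with the paper's actual proof: the same expansion $\text{per}(tJ_6+(1-t)A)=\sum_{r=0}^{6}t^r(1-t)^{6-r}\frac{r!}{6^r}\sigma_{6-r}(A)$, the factoring out of $t(1-t)$, and the successive substitutions $\text{per}A\geq\frac{1}{36}\sigma_5(A)$ and $\sigma_5(A)\geq\frac{2}{15}\sigma_4(A)$ (exactly the two hypothesis polynomials via Theorem 1.2, as you identify) and then $\sigma_4(A)\geq\frac{3}{8}\sigma_3(A)$ (where your appeal to Kopotun's unconditional $k=4$, $n\geq5$ result is actually cleaner than the paper, which cites Theorem 1.2 without an assumed $k=4$ polynomial). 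But the continuation you propose --- cascading through $\sigma_3\geq\frac{8}{9}\sigma_2$ and $\sigma_2\geq\frac{25}{2}$ and certifying coefficient signs numerically --- provably cannot reach $0.7836$. Carrying out the computation: after the first three substitutions one has $F_A(t)\geq\frac{1}{2160}(-3-3t-3t^2+42t^3-30t^4)\sigma_3(A)-\frac{1}{54}t^3(1-t)\sigma_2(A)+\frac{5}{324}(1+t+t^2+t^3-5t^4)$; the $\sigma_3$-coefficient is nonnegative for $t\geq0.653$, but after substituting $\sigma_3\geq\frac{8}{9}\sigma_2$ the coefficient of $\sigma_2$ becomes $\frac{1}{810}(5t^4-t^3-t^2-t-1)$, which is \emph{negative} on $[0.7836,\,0.916\ldots)$. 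Since, as you correctly note, the fully cascaded bound collapses to exactly $0$, there is no slack to absorb this wrong-signed term: replacing $\sigma_2$ by its lower bound is invalid there, and using the upper bound $\sigma_2\leq15$ instead yields the bound $\frac{1}{324}(5t^4-t^3-t^2-t-1)<0$. So your method establishes the theorem only for $t\gtrsim0.917$, and your guess that $0.7836$ is the point where ``one of the reduced coefficients first turns negative'' is incorrect --- no cascade coefficient has a root there.

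The missing idea sits precisely at the obstacle you flag but leave unresolved. The paper stops the linear cascade at the level of $\sigma_3,\sigma_2$ and changes variables to power sums: by Dokovic and Tverberg, for $A\in\Omega_6$, $\sigma_2(A)=\frac{1}{2}\sum_{i,j}a_{ij}^2+12$ and $\sigma_3(A)=\frac{2}{3}\sum_{i,j}a_{ij}^3+\sum_{i,j}a_{ij}^2+10$. After substitution the coefficient of $\sum_{i,j}a_{ij}^3$ is nonnegative for $t\in[0.6530,1]$, so Kopotun's row-wise inequality $\bigl(\sum_{j}a_{ij}^2\bigr)^2\leq\sum_{j}a_{ij}^3$ reduces the bound to $\sum_{i=1}^{6}f(x_i)+\frac{1}{648}(1+t+t^2-8t^3+4t^4)$, where $x_i=\sum_j a_{ij}^2$ and $f$ is an explicit quadratic; the London--Minc bound $x_i\geq\frac{1}{6}$ together with $f'(x)\geq0$ for $t\in[0.7836,1]$ --- this monotonicity condition, not a cascade-coefficient sign, is what defines the constant $0.7836$ --- gives $F_A(t)\geq 6f(\frac{1}{6})+\frac{1}{648}(1+t+t^2-8t^3+4t^4)=0$. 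In short: your proposal matches the paper down through $\sigma_4$ and correctly diagnoses why a pure linear bound over the feasible $\sigma$-vectors fails, but the nonlinear step (power-sum identities plus the Cauchy--Schwarz-type row inequality) that rescues the interval $[0.7836,\,0.917)$ is absent, so as written your argument proves the statement only on a strictly smaller interval.
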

\begin{proof}
$t$ per$J_6+(1-t)$per$A$-per$(tJ_6+(1-t)A)$\\ 

\noindent =$\frac{5}{324}t+(1-t)$per$A-\sum_{r=0}^{6}t^r(1-t)^{6-r}\frac{r!}{6^r}\sigma_{6-r}(A)$ \\

\noindent =$\frac{5}{324}t+(1-t)$per$A-[(1-t)^6$per$A+\frac{1}{6}t(1-t)^5\sigma_5(A)+\frac{2}{6^2}t^2(1-t)^4\sigma_4(A)+\frac{6}{6^3}t^3(1-t)^3\sigma_3(A)+\frac{24}{6^4}t^4(1-t)^2\sigma_2(A)+\frac{120}{6^5}t^5(1-t)\sigma_1(A)+\frac{720}{6^6}t^6]$\\

\noindent =$\frac{5}{324}t+$per$A(1-t)[1-(1-t)^5]-\frac{1}{6}t(1-t)^5\sigma_5(A)-\frac{1}{18}t^2(1-t)^4\sigma_4(A)-\frac{1}{36}t^3(1-t)^3\sigma_3(A)-\frac{1}{54}t^4(1-t)^2\sigma_2(A)-\frac{5}{54}t^5(1-t)-\frac{5}{324}t^6$\\


\noindent =$\frac{5}{324}t(1-t)(1+t+t^2+t^3+t^4)-\frac{5}{54}t^5(1-t)$+per$A(1-t)(5t-10t^2+10t^3-5t^4+t^5)-\frac{1}{6}t(1-t)^5\sigma_5(A)-\frac{1}{18}t^2(1-t)^4\sigma_4(A)-\frac{1}{36}t^3(1-t)^3\sigma_3(A)-\frac{1}{54}t^4(1-t)^2\sigma_2(A)$ \\


\noindent =$\frac{5}{324}t(1-t)(1+t+t^2+t^3-5t^4)$+per$At(1-t)(5-10t+10t^2-5t^3+t^4)-\frac{1}{6}t(1-t)^5\sigma_5(A)-\frac{1}{18}t^2(1-t)^4\sigma_4(A)-\frac{1}{36}t^3(1-t)^3\sigma_3(A)-\frac{1}{54}t^4(1-t)^2\sigma_2(A)$ \\

\noindent =$t(1-t)F_A(t),$ where\\

\noindent $F_A(t)=\frac{5}{324}(1+t+t^2+t^3-5t^4)$+per$A(5-10t+10t^2-5t^3+t^4)-\frac{1}{6}(1-t)^4\sigma_5(A)-\frac{1}{18}t(1-t)^3\sigma_4(A)-\frac{1}{36}t^2(1-t)^2\sigma_3(A)-\frac{1}{54}t^3(1-t)\sigma_2(A)$.\\

\noindent It is enough to prove that $F_A(t)\geq0$ for all $t\in[0.7836,1]$. Since the theorem holds good for $t=1$, it is sufficient to consider $t<1.$\\

\noindent $F_A(t)=(5-10t+10t^2-5t^3+t^4)(perA-\frac{1}{36}\sigma_5(A))+\frac{1}{36}(5-10t+10t^2-5t^3+t^4)\sigma_5(A)-\frac{1}{6}(1-t)^4\sigma_5(A)-\frac{1}{18}t(1-t)^3\sigma_4(A)-\frac{1}{36}t^2(1-t)^2\sigma_3(A)-\frac{1}{54}t^3(1-t)\sigma_2(A)+\frac{5}{324}(1+t+t^2+t^3-5t^4).$\\

\noindent By Theorem 1.2, $F_A(t)\geq \frac{1}{36}(-1+14t-26t^2+19t^3-5t^4)\sigma_5(A)-\frac{1}{18}t(1-t)^3\sigma_4(A)-\frac{1}{36}t^2(1-t)^2\sigma_3(A)-\frac{1}{54}t^3(1-t)\sigma_2(A)+\frac{5}{324}(1+t+t^2+t^3-5t^4)$\\

\noindent =$\frac{1}{36}(-1+14t-26t^2+19t^3-5t^4)(\sigma_5(A)-\frac{2}{15}\sigma_4(A))+\frac{1}{270}\sigma_4(A)(-1+14t-26t^2+19t^3-5t^4)-\frac{1}{18}t(1-t)^3\sigma_4(A)-\frac{1}{36}t^2(1-t)^2\sigma_3(A)-\frac{1}{54}t^3(1-t)\sigma_2(A)+\frac{5}{324}(1+t+t^2+t^3-5t^4).$\\

\noindent By Theorem 1.2, $F_A(t)\geq \frac{1}{270}\sigma_4(A)(-1+14t-26t^2+19t^3-5t^4)-\frac{1}{18}t(1-t)^3\sigma_4(A)-\frac{1}{36}t^2(1-t)^2\sigma_3(A)-\frac{1}{54}t^3(1-t)\sigma_2(A)+\frac{5}{324}(1+t+t^2+t^3-5t^4).$\\

\noindent This implies that $F_A(t)\geq\frac{1}{270}\sigma_4(A)\{-1+14t-26t^2+19t^3-5t^4-15t(1-3t+3t^2-t^3)\}-\frac{1}{36}t^2(1-t)^2\sigma_3(A)-\frac{1}{54}t^3(1-t)\sigma_2(A)+\frac{5}{324}(1+t+t^2+t^3-5t^4)$\\

\noindent =$\frac{1}{270}\sigma_4(A)(-1-t+19t^2-26t^3+10t^4)-\frac{1}{36}t^2(1-t)^2\sigma_3(A)-\frac{1}{54}t^3(1-t)\sigma_2(A)+\frac{5}{324}(1+t+t^2+t^3-5t^4)$\\

\noindent =$\frac{1}{270}(-1-t+19t^2-26t^3+10t^4)(\sigma_4(A)-\frac{3}{8}\sigma_3(A))+\frac{3}{2160}\sigma_3(A)(-1-t+19t^2-26t^3+10t^4)-\frac{1}{36}t^2(1-t)^2\sigma_3(A)-\frac{1}{54}t^3(1-t)\sigma_2(A)+\frac{5}{324}(1+t+t^2+t^3-5t^4)$\\

\noindent Again by Theorem 1.2, $F_A(t)\geq \frac{3}{2160}\sigma_3(A)(-1-t+19t^2-26t^3+10t^4)-\frac{1}{36}t^2(1-t)^2\sigma_3(A)-\frac{1}{54}t^3(1-t)\sigma_2(A)+\frac{5}{324}(1+t+t^2+t^3-5t^4)$ \\

\noindent =$\sigma_3(A)(\frac{-3}{2160}-\frac{3}{2160}t-\frac{3}{2160}t^2+\frac{42}{2160}t^3-\frac{30}{2160}t^4)-\frac{1}{54}t^3(1-t)\sigma_2(A)+\frac{5}{324}(1+t+t^2+t^3-5t^4)$\\

From Dokovic\cite{DZD1967} and Tverberg\cite{HTO1963} we know that for $A\in\Omega_n$,  

\noindent $\sigma_2(A)=\frac{1}{2}\sum_{i,j=1}^{n}a_{ij}^2+\frac{1}{2}n(n-2)$ and\\

\noindent $\sigma_3(A)=\frac{2}{3}\sum_{i,j=1}^{n}a_{ij}^3+\frac{1}{2}(n-4)\sum_{i,j=1}^{n}a_{ij}^2+\frac{1}{6}n(n^2-6n+10).$ \\

\noindent Therefore $F_A(t)\geq\frac{1}{2160}(-3-3t-3t^2+42t^3-30t^4)(\frac{2}{3}\sum_{i,j=1}^{6}a_{ij}^3+\sum_{i,j=1}^{6}a_{ij}^2+10)-\frac{1}{54}t^3(1-t)(\frac{1}{2}\sum_{i,j=1}^{6}a_{ij}^2+12)+\frac{5}{324}(1+t+t^2+t^3-5t^4)$\\

\noindent =$\frac{1}{3240}(-3-3t-3t^2+42t^3-30t^4)\sum_{i,j=1}^{6}a_{ij}^3+\frac{1}{2160}\sum_{i,=1}^{6}a_{ij}^2(-3-3t-3t^2+22t^3-10t^4)+\frac{1}{648}(1+t+t^2-8t^3+4t^4)$ \\

\noindent =$\sum_{i=1}^{6}\{\frac{1}{3240}(-3-3t-3t^2+42t^3-30t^4)\sum_{j=1}^{6}a_{ij}^3+\frac{1}{2160}\sum_{j=1}^{6}a_{ij}^2(-3-3t-3t^2+22t^3-10t^4)\}+\frac{1}{648}(1+t+t^2-8t^3+4t^4).$\\

\noindent It is easy to see that the coefficient of $\sum_{j=1}^{6}a_{ij}^3$ is nonnegative for all $t\in [0.6530, 1]$ and $(\sum_{j=1}^{6}a_{ij}^2)^2\leq\sum_{j=1}^{6}a_{ij}^3$ for $i=1,2,...6$ (Kopotun\cite{KAK1994}). \\

\noindent Therefore, for all $t\in[0.6530, 1], \\F_A(t)\geq\sum_{i=1}^{6}[\frac{1}{3240}(-3-3t-3t^2+42t^3-30t^4)(\sum_{j=1}^{6}a_{ij}^2)^2+\frac{1}{2160}(-3-3t-3t^2+22t^3-10t^4)\sum_{j=1}^{6}a_{ij}^2]+\frac{1}{648}(1+t+t^2-8t^3+4t^4).$\\

\noindent Put $x=\sum_{j=1}^{6}a_{ij}^2$ and\\\\$f(x)=\frac{1}{3240}(-3-3t-3t^2+42t^3-30t^4)x^2+\frac{1}{2160}(-3-3t-3t^2+22t^3-10t^4)x$.\\

\noindent Now, $f'(x)=\frac{2}{3240}x(-3-3t-3t^2+42t^3-30t^4)+\frac{1}{2160}(-3-3t-3t^2+22t^3-10t^4)$\\

\noindent It it easy to see that for $t\in[0.7836,1], f'(x)\geq 0. $\\

\noindent London and Minc(Lemma1,\cite{DLO1989}) showed that $x=\sum_{j=1}^{6}a_{ij}^2\geq\frac{1}{6}.$\\ 

\noindent Hence $f(x)$ is an increasing function on $[\frac{1}{6},\infty)$.\\

\noindent That is, $f(x)\geq f(\frac{1}{6})=\frac{-1}{3888}-\frac{1}{3888}t-\frac{1}{3888}t^2+\frac{1}{486}t^3-\frac{1}{972}t^4$\\

\noindent Therefore $t\in[0.7836,1]\\F_A(t)\geq 6(\frac{-1}{3888}-\frac{1}{3888}t-\frac{1}{3888}t^2+\frac{1}{486}t^3-\frac{1}{972}t^4)+\frac{1}{648}(1+t+t^2-8t^3+4t^4)=0$.\\

\noindent Hence the theorem.
\end{proof}
The following two corollaries are consequence of the above theorem and theorems in \cite{SSK2016}.
\begin{cor}
If $A$ is a normal matrix in $\Omega_6$ whose eigenvalues lie in the sector $[\frac{-\pi}{10},\frac{\pi}{10}]$ and the polynomial $\sum_{r=2}^{6}r\frac{(6-r)!}{6^{6-r}}\sigma_r(A-J_6)t^{r-2}$ has no root in $(0, 1)$ then \\
per$(tJ_6+(1-t)A) \leq t$ per $J_6+(1-t)$per$A$ for all $t\in [0.7836, 1].$
\end{cor}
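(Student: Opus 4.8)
The plan is to deduce the corollary directly from Theorem 2.2 as a black box. Recall that the hypothesis of Theorem 2.2 is that \emph{both} polynomials
$$P_5(t)=\sum_{r=2}^{5}r\frac{(5-r)!}{6^{5-r}}\binom{6-r}{5-r}^2\sigma_r(A-J_6)t^{r-2},\qquad P_6(t)=\sum_{r=2}^{6}r\frac{(6-r)!}{6^{6-r}}\sigma_r(A-J_6)t^{r-2}$$
have no root in $(0,1)$. The corollary already assumes outright that $P_6$ has no root in $(0,1)$, so the entire task reduces to showing that the normality together with the spectral sector hypothesis forces $P_5$ to be root-free on $(0,1)$ as well. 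Once this is in hand, Theorem 2.2 applies verbatim and delivers $\mathrm{per}(tJ_6+(1-t)A)\le t\,\mathrm{per}(J_6)+(1-t)\,\mathrm{per}(A)$ for all $t\in[0.7836,1]$.

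To bring the spectral hypothesis to bear I would first record that every $A\in\Omega_6$ satisfies $AJ_6=J_6A=J_6$, so $A$ commutes with the symmetric matrix $J_6$. Hence $B:=A-J_6$ is again normal, and its spectrum is read off from that of $A$: the Perron eigenvalue $1$ of $A$ (with the all-ones eigenvector, on which $J_6$ also acts as $1$) contributes $1-1=0$, while the remaining eigenvalues $\lambda_2,\dots,\lambda_6$ of $A$ are unchanged, since $J_6$ annihilates the orthogonal complement of the all-ones vector. Thus $B$ is normal with eigenvalues $0,\lambda_2,\dots,\lambda_6$, all of which lie in the closed sector $\{z:|\arg z|\le \pi/10\}$ by assumption.

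The decisive step, and the place where I would invoke the eigenvalue-localization results of \cite{SSK2016}, is to convert this sector information into the sign information $\sigma_r(A-J_6)\ge 0$ for $r=2,3,4,5$. Since the coefficients $r\frac{(5-r)!}{6^{5-r}}\binom{6-r}{5-r}^2$ are strictly positive, such nonnegativity makes $P_5(t)>0$ throughout $(0,1)$ (the degenerate case $A=J_6$, where $P_5\equiv 0$ and the asserted inequality is a trivial equality, being handled separately), hence $P_5$ is root-free. The reason the precise half-angle $\pi/10=\pi/(2\cdot 5)$ enters is that it is exactly calibrated to the index $k=5$: because $B$ is normal, one can express $\sigma_r(A-J_6)$ through symmetric functions of the spectrum $\lambda_2,\dots,\lambda_6$, and any product of at most five of these eigenvalues has argument at most $5\cdot\pi/10=\pi/2$ and therefore nonnegative real part; assembling such products (which, since $A$ is real, occur in conjugate pairs and so sum to real quantities) keeps each $\sigma_r(A-J_6)$ in the nonnegative reals for $r\le 5$. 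I expect this passage to be the main obstacle: making rigorous the link between the subpermanental symmetric functions $\sigma_r(A-J_6)$ and the spectrum of the normal matrix $B$, and checking that the single angle $\pi/10$ is simultaneously sharp enough for all $r\le 5$, is the only nonroutine part. Everything else is bookkeeping, and with $P_5$ and $P_6$ both root-free on $(0,1)$ Theorem 2.2 closes the argument.
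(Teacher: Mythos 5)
Your proposal matches the paper's proof: the paper likewise observes that the hypothesis already grants root-freeness of the degree-$6$ polynomial, and reduces the corollary to the fact (imported from \cite{SSK2016}) that normality plus the sector condition $[\frac{-\pi}{10},\frac{\pi}{10}]$ forces the polynomial $\sum_{r=2}^{5}r\frac{(5-r)!}{6^{5-r}}\binom{6-r}{5-r}^2\sigma_r(A-J_6)t^{r-2}$ to have no root in $(0,1)$, after which Theorem 2.2 applies verbatim. Your extra sketch of why the sector half-angle $\pi/10=\pi/(2\cdot 5)$ yields $\sigma_r(A-J_6)\ge 0$ for $r\le 5$ is exactly the content delegated to the cited reference, which the paper does not reprove either.
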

\begin{proof}
The proof follows since if $A$ is a normal matrix in $\Omega_6$ whose eigenvalues lie in the sector $[\frac{-\pi}{10}, \frac{\pi}{10}]$ then the polynomial $\sum_{r=2}^{5}r\frac{(5-r)!}{6^{5-r}}{{6-r}\choose{5-r}}^2\sigma_r(A-J_6)t^{r-2}$ has no roots in (0, 1).
\end{proof}

\begin{cor}
If $A$ is a normal matrix in $\Omega_6$ whose eigenvalues lie in the sector $[\frac{-\pi}{12}, \frac{\pi}{12}]$ then per$(tJ_6+(1-t)A)\leq t$ per$J_6+(1-t)$per$A$ for all $t\in[0.7836, 1]$.

\end{cor}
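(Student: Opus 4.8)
The plan is to obtain Corollary 2.2 as a direct application of Theorem 2.2, by arguing that the single hypothesis ``eigenvalues in $[-\tfrac{\pi}{12},\tfrac{\pi}{12}]$'' is already strong enough to force \emph{both} polynomials appearing in Theorem 2.2 to have no root in $(0,1)$. Once that is established, Theorem 2.2 applies verbatim and yields $\mathrm{per}(tJ_6+(1-t)A)\le t\,\mathrm{per}\,J_6+(1-t)\,\mathrm{per}\,A$ on $[0.7836,1]$, which is exactly the assertion. So the whole proof reduces to verifying the two root-free conditions from the sector hypothesis.

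The first (order-$5$) polynomial $\sum_{r=2}^{5}r\frac{(5-r)!}{6^{5-r}}\binom{6-r}{5-r}^{2}\sigma_r(A-J_6)t^{r-2}$ requires no new work. Since $[-\tfrac{\pi}{12},\tfrac{\pi}{12}]\subset[-\tfrac{\pi}{10},\tfrac{\pi}{10}]$, any normal $A\in\Omega_6$ with eigenvalues in the narrower sector also satisfies the hypothesis invoked in the proof of Corollary 2.1; hence, by the result of \cite{SSK2016} quoted there, this polynomial has no root in $(0,1)$. Thus the only genuinely new ingredient is the second (full, order-$6$) polynomial $\sum_{r=2}^{6}r\frac{(6-r)!}{6^{6-r}}\sigma_r(A-J_6)t^{r-2}$, which in Corollary 2.1 still had to be assumed root-free, and which the tightened sector must now handle by itself.

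To control this second polynomial I would argue in the style of \cite{SSK2016}. Because $A$ is normal and doubly stochastic it commutes with $J_6$ (indeed $AJ_6=J_6A=J_6$), so $A-J_6$ is normal with spectrum $\{0,\mu_2,\dots,\mu_6\}$, where $\mu_2,\dots,\mu_6$ are the non-Perron eigenvalues of $A$ and satisfy $|\arg\mu_j|\le\tfrac{\pi}{12}$. I would then bound the argument of each coefficient $\sigma_r(A-J_6)$ in terms of the eigenvalue arguments, so that for every $t\in(0,1)$ the terms $r\frac{(6-r)!}{6^{6-r}}\sigma_r(A-J_6)t^{r-2}$ all lie in a common open half-plane; their sum is then nonzero, i.e.\ the polynomial has no root in $(0,1)$. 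The point of shrinking the sector from $\pi/10$ to $\pi/12$ is precisely that the full-order polynomial involves higher-degree spectral combinations, whose arguments accumulate more, so a narrower angle is needed to keep them off the negative real axis.

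The main obstacle is exactly this last argument estimate: relating $\arg\sigma_r(A-J_6)$ to the eigenvalue arguments of $A-J_6$ and checking that $\pi/12$ is the correct threshold that keeps the order-$6$ polynomial from vanishing on $(0,1)$, whereas $\pi/10$ sufficed only for the order-$5$ polynomial. Everything else is bookkeeping: with both polynomials confirmed root-free on $(0,1)$, Theorem 2.2 delivers the desired permanental inequality for all $t\in[0.7836,1]$, completing the proof.
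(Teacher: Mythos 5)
Your reduction is exactly the paper's: the published proof of this corollary consists of the single remark that, for a normal $A\in\Omega_6$ with eigenvalues in $[-\frac{\pi}{12},\frac{\pi}{12}]$, both polynomials $\sum_{r=2}^{5}r\frac{(5-r)!}{6^{5-r}}\binom{6-r}{5-r}^2\sigma_r(A-J_6)t^{r-2}$ and $\sum_{r=2}^{6}r\frac{(6-r)!}{6^{6-r}}\sigma_r(A-J_6)t^{r-2}$ have no roots in $(0,1)$ --- facts drawn from theorems in \cite{SSK2016} --- after which Theorem 2.2 yields the inequality on $[0.7836,1]$. Your handling of the order-$5$ polynomial via the inclusion $[-\frac{\pi}{12},\frac{\pi}{12}]\subset[-\frac{\pi}{10},\frac{\pi}{10}]$ matches this. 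The problem is the order-$6$ polynomial, where you replace the citation by a proof sketch and, by your own admission, leave its central estimate unverified; moreover, the sketch as stated would not go through.

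The flaw is that you treat $\sigma_r(A-J_6)$ as if it were an elementary symmetric function of the eigenvalues of $A-J_6$, so that its ``argument'' is at most $r$ times the sector half-angle and the terms of the polynomial can be herded into a common open half-plane. But $\sigma_r$ is the sum of permanents of \emph{all} $r\times r$ submatrices, not just the principal ones, and it is not a spectral invariant even for normal matrices: each $\sigma_r(A-J_6)$ is automatically real (so the question is its sign, not its argument), and already $\sigma_3$ depends on the entrywise cube sum $\sum_{i,j}a_{ij}^3$ --- see the Dokovic--Tverberg formula $\sigma_3(A)=\frac{2}{3}\sum_{i,j}a_{ij}^3+\frac{1}{2}(n-4)\sum_{i,j}a_{ij}^2+\frac{1}{6}n(n^2-6n+10)$ quoted in the proof of Theorem 2.2 --- a quantity not determined by the spectrum of a normal matrix; only $\sigma_2(A-J_6)=\frac{1}{2}\sum_{i,j}(a_{ij}-\frac{1}{6})^2$ happens to be spectral. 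Your half-plane mechanism does apply verbatim to the elementary symmetric functions $e_r(\mu_2,\dots,\mu_6)$ of the nonunit eigenvalues, and $6\cdot\frac{\pi}{12}=\frac{\pi}{2}$ is indeed the right threshold heuristic, but bridging from $e_r$ of the spectrum to the subpermanent sums $\sigma_r(A-J_6)$ is precisely the content of the theorems of \cite{SSK2016} that the paper invokes and that you would have to prove or cite. As it stands, the decisive step --- that the $\frac{\pi}{12}$ sector forces the order-$6$ polynomial to be root-free on $(0,1)$ --- is missing from your proposal.
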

\begin{proof}
The proof follows since if $A$ is a normal matrix in $\Omega_6$ whose eigenvalues lie in the sector $[\frac{-\pi}{12}, \frac{\pi}{12}]$ then the polynomials $\sum_{r=2}^{5}r\frac{(5-r)!}{6^{5-r}}{{6-r}\choose{5-r}}^2\sigma_r(A-J_6)t^{r-2}$ and $\sum_{r=2}^{6}r\frac{(6-r)!}{6^{6-r}}\sigma_r(A-J_6)t^{r-2}$ have no roots in (0, 1).

\end{proof}
\section{Dittert's Conjecture for $n=4$}
SG Hwang \cite{SGH1987} stated that if $A$ is a $\phi$ maximizing matrix on $K_n$ then $A$ is fully indecomposable. Based on this we prove that if $A$ is a $\phi$-maximizing matrix on $K_4$ then $A$ is fully indecomposable. We denote $R_A=(r_1, ..., r_n)$ and $C_A=(c_1, ..., c_n)$ are respectively the row and column sum vectors of $A$.

\begin{thm}
If $A$ is a $\phi$-maximizing matrix on $K_4$ then $A$ is fully indecomposble.
\end{thm}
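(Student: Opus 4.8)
The plan is to argue by contradiction: assume $A = [a_{ij}]$ is a $\phi$-maximizing matrix on $K_4$ that is \emph{not} fully indecomposable, and derive a contradiction by exhibiting a matrix $X \in K_4$ with $\phi(X) > \phi(A)$, or by contradicting one of the known structural constraints on $\phi$-maximizers. A matrix in $K_4$ that fails to be fully indecomposable contains, after permuting rows and columns (a permutation that preserves $\phi$), an $s \times t$ zero block with $s + t = 4$. So up to symmetry there are essentially two shapes of the forbidden zero block to rule out: the $1 \times 3$ (equivalently $3 \times 1$) case and the $2 \times 2$ case. The overall strategy is to handle these cases separately using the machinery already available in the excerpt.

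First I would set up the reduction. By Lemma~1.1, if two columns (or rows) share a $(0,1)$ pattern we may average them without leaving the maximizing set, which lets us normalize the structure of the zero block; and by Sinkhorn's result every $\phi$-maximizer has positive permanent, so $\mathrm{per}(A) > 0$, which already forces each line to be nonzero and rules out certain degenerate block placements. The main tool, however, is Theorem~1.5 together with Lemma~1.2: for a $\phi$-maximizer, $\phi_{ij}(A) = \phi(A)$ whenever $a_{ij} > 0$ and $\phi_{ij}(A) \le \phi(A)$ whenever $a_{ij} = 0$, where $\phi_{ij}(A) = \prod_{k \ne i} r_k + \prod_{l \ne j} c_l - \mathrm{per}\,A(i \mid j)$. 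The idea is that the presence of a large zero block makes many of the entries $a_{ij}$ within the complementary positions positive, and the resulting equalities $\phi_{ij}(A) = \phi(A)$ impose rigid relations among the row sums $r_k$, the column sums $c_l$, and the cofactor permanents $\mathrm{per}\,A(i \mid j)$. For the $2 \times 2$ zero block, say $a_{ij} = 0$ for $i \in \{1,2\}, j \in \{1,2\}$, the permanent $\mathrm{per}(A)$ factors through the two diagonal $2\times 2$ blocks, and the cofactor permanents simplify dramatically; I expect the equality constraints to collapse to a contradiction with $\mathrm{per}(A) > 0$ or with the optimality of $J_4$ established for smaller $n$.

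The key steps in order: (i) reduce to the two canonical block shapes via row/column permutations and Lemma~1.1; (ii) in each case write $\mathrm{per}(A)$ and the relevant $\mathrm{per}\,A(i\mid j)$ explicitly in terms of the surviving entries, exploiting that the zero block kills all the transversals that would otherwise use a forbidden position; (iii) apply Theorem~1.5 at a positive entry and at a zero entry to extract the equalities and inequalities $\phi_{ij}(A) = \phi(A)$ and $\phi_{ij}(A) \le \phi(A)$; (iv) combine these with the constraint $\sum r_i = \sum c_j = n = 4$ and the positivity of $\mathrm{per}(A)$ to force the entries into a configuration that either violates an inequality or reproduces a partly decomposable matrix already known (by Cheon--Wanless) to satisfy $\phi(A) < \phi(J_4)$, contradicting maximality.

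The hard part will be the bookkeeping in the $2 \times 2$ block case, since there the matrix splits into two $2\times 2$ corner blocks and an off-block, and the cofactor permanents $\mathrm{per}\,A(i \mid j)$ no longer vanish; one must carefully track which transversals survive and verify that the system of equalities coming from Theorem~1.5 is genuinely inconsistent rather than merely forcing $A$ toward $J_4$. A secondary subtlety is ensuring that the averaging operations of Lemma~1.1 do not inadvertently destroy the zero block we are trying to exploit, so I would apply them only to lines outside the block or argue that the averaged matrix retains partial decomposability. If the direct inequality approach stalls, the fallback is to invoke the Cheon--Wanless theorem directly: a partly decomposable $A \in K_4$ satisfies $\phi(A) < \phi(J_4) = \phi(J_n)$, and since $J_4 \in K_4$ this immediately contradicts the assumption that $A$ is $\phi$-maximizing, giving full indecomposability with minimal computation.
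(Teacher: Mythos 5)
Your proposal is correct, but only because of the fallback you mention in your last sentence, and that fallback is a genuinely different route from the paper's. Invoking the Cheon--Wanless result \cite{CW2012} --- any partly decomposable $A\in K_n$ satisfies $\phi(A)<\phi(J_n)$ --- settles the theorem in one line, since $J_4\in K_4$ then witnesses that a partly decomposable matrix cannot be $\phi$-maximizing; the paper itself quotes this result in its introduction, so its Theorem 3.1 is a formal consequence of it. What the paper supplies instead (responding to Hwang's remark) is a self-contained elementary proof: after normalizing to $a_{12}=a_{13}=a_{14}=0$, it splits into cases according to how many of $a_{21},a_{31},a_{41}$ vanish, uses Lemma 1.2 only to force $r_2=r_3=r_4$ and Lemma 1.1 to average equal-pattern lines down to a matrix with three parameters, and then --- this is the step your main-line plan (i)--(iv) never supplies --- exhibits explicit comparison matrices with identical row and column sums and equal or strictly smaller permanent (the matrices $A_4$, $A_5$, $A_6$ and the perturbations $\tilde{A_\epsilon}$, $\hat{A}$), whose existence contradicts either Lemma 1.2 or maximality. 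Your expectation that the equalities $\phi_{ij}(A)=\phi(A)$ from Theorem 1.5 would by themselves \emph{collapse to a contradiction} is not borne out: in the all-positive-column case those equalities merely rigidify $A$ into a structured form, and the actual contradiction has to come from the permanent-decreasing rearrangements, so your primary sketch would not close without them. On one point you are more careful than the paper: you correctly flag the $2\times 2$ zero block as a genuinely separate shape, and the paper's reduction \emph{without loss of generality $a_{12}=a_{13}=a_{14}=0$} does not cover it (a matrix whose only zero block is $2\times 2$ has no line containing three zeros), whereas the Cheon--Wanless route handles all block shapes uniformly. In short, your fallback buys brevity and completeness at the cost of appealing to a stronger external theorem, while the paper's constructive route buys self-containedness at the cost of heavy case analysis.
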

\begin{proof}
Let $A=[a_{ij}]$ be a  $\phi$-maximizing matrix on $K_4$. First, we assume that $A$ is not fully indecomposable. Without loss of generality we may assume that $a_{12}=a_{13}=a_{14}=0$. Now we intend to show that $(a_{21},a_{31},a_{41})\neq(0,0,0).$ Suppose $a_{21}=a_{31}=a_{41}=0.$ Let $B=A(1|1)$ and $a_{11}=a$. Then $A=aI_1\oplus B$,  $\phi(A)=a\phi(B)$ and $\phi_{11}(A)=\phi(B)$. By Theorem 1.3, $a=1$ and hence $B \in K_3$ and by Theorem 1.4, $B=J_3$ and hence $\phi(A)=\phi(B)=\phi(J_3)<\phi(J_4)$, which is a contradiction. Thus $(a_{21},a_{31},a_{41})\neq(0,0,0).$\\

\indent If $a_{21}>0,a_{31}>0$ and $a_{41}>0$, by Lemma $1.2$, $\phi_{21}(A)=\phi_{31}(A)=\phi_{41}(A)$, $\phi_{21}(A)=r_1r_3r_4+c_2c_3c_4$-per$A(2|1)$ and $\phi_{31}(A)=r_1r_2r_4+c_2c_3c_4$-per$A(3|1)$. Therefore $\phi_{21}(A) =\phi_{31}(A)=o$ implies $ar_4(r_3-r_2)=0$ and hence $r_2=r_3$. Then by Lemma 1.1 (the matrix $A_1$ obtained from $A$ by replacing the 2nd and 3rd rows by its average and $A_1$ is also a $\phi$-maximizing matrix on $K_4$), we assume that $A_1$ is of the form\\ 
\begin{center}$A_1 =
\begin{bmatrix}
a&0&0&0\\x&y&z&t\\x&y&z&t\\p&q&r&s
\end{bmatrix}$, \end{center} where $a x y z t p q r s>0$. Since the second, third and fourth columns have the same (0, 1) patterns, by Lemma 1.1 we assume that $A_1$ is of the form\\
\begin{center}$A_2 =
\begin{bmatrix}
a&0&0&0\\x&y&y&y\\x&y&y&y\\p&q&q&q
\end{bmatrix}$,
\end{center} 
where $a x y p q>0$. Now since the second, third and fourth rows have the same (0, 1) patterns, by Lemma 1.1 we can assume that $A_2$ is of the form\\
\begin{center}$A_3 =
\begin{bmatrix}
a&0&0&0\\x&y&y&y\\x&y&y&y\\x&y&y&y
\end{bmatrix}$,
\end{center} 
where $a x y>0$. Suppose $x\geq y$. We construct $A_4$ from $A_3$ such that $R_{A_4}=R_{A_3}, C_{A_4}=C_{A_3}$ and per$A_4=$per$A_3$, \\ 
\begin{center}$A_4=
\begin{bmatrix}
a&0&0&0\\x&y&y&y\\x+y&0&y&y\\x-y&2y&y&y
\end{bmatrix}.$
\end{center} 
The matrix $A_4$ is also a $\phi$-maximizing matrix on $K_4$ since $R_{A_4}=R_{A_3}, C_{A_4}=C_{A_3}$ and per$A_4=$per$A_3$. Now $a_{33}>0$ and $a_{42}>0$, but $\phi_{42}(A_4)-\phi_{33}(A_4)=ay^2>0$ which is a contradiction to Lemma 1.2. Hence $x<y$. We construct $A_5$ from $A_3$ such that $R_{A_5}=R_{A_3}, C_{A_5}=C_{A_3}$ and per$A_5=$per$A_3$, \\ \\
\begin{center}$A_5=
\begin{bmatrix}
a&0&0&0\\x&y&y&y\\2x&y&y&y-x\\0&y&y&y+x
\end{bmatrix}.$
\end{center} 
The matrix $A_5$ is a $\phi$-maximizing matrix on $K_4$ since $R_{A_5}=R_{A_3}, C_{A_5}=C_{A_3}$ and per$A_5=$per$A_3$. So, by Lemma 1.1, we construct the matrix $A_6$ from $A_5$ by taking average of 3rd and 4th columns of $A_5$.  \\
\begin{center}$A_6=
\begin{bmatrix}
a&0&0&0\\x&y&y&y\\2x&y&y-\frac{x}{2}&y-\frac{x}{2}\\0&y&y+\frac{x}{2}&y+\frac{x}{2}
\end{bmatrix}$
\end{center} 
is also a $\phi$-maximizing matrix on $K_4$. $R_{A_6}=R_{A_5}, C_{A_6}=C_{A_5}$ but per$A_6=a(6y^3-\frac{yx^2}{2})<6ay^3=$per$A_5$. This implies that $\phi(A_6)>\phi(A_5)=\phi(A_3),$ which is a contradiction. Hence at least one of $a_{21},a_{31},a_{41}$ is zero.\\  
Without loss of generality we may assume that $a_{21}>0$ and $a_{31}=a_{41}=0$. Hence $A$ has the form \\
\begin{center}$A=
\begin{bmatrix}
a&0&0&0\\b&B\\0\\0
\end{bmatrix}$
\end{center} with $a b>0$. We claim that $B$ is not a positive matrix. Because if $B>0$ then we may say \\
\begin{center}$A=
\begin{bmatrix}
a&0&0&0\\b&x&x&x\\0&y&y&y\\0&z&z&z
\end{bmatrix}.$
\end{center} 
Since the third and fourth rows have the same (0, 1) patterns, we construct the matrix $\tilde{A}$ from $A$ by taking the average of 3rd and 4th rows of $A$. \\
\begin{center}$\tilde{A}=
\begin{bmatrix}
a&0&0&0\\b&x&x&x\\0&y&y&y\\0&y&y&y
\end{bmatrix}$
\end{center}
 with $a b x y>0$. $\phi_{11}(\tilde{A})-\phi_{21}(\tilde{A})=3y^2(3b-3a+7x)$. From Lemma 1.2, $\phi_{11}(\tilde{A})=\phi_{21}(\tilde{A})$, this implies $3a=3b+7x$. For a sufficiently small $\epsilon>0$, consider the perturbation matrix $\tilde{A_\epsilon}$, \\
\begin{center}$\tilde{A_\epsilon}=
\begin{bmatrix}
a-\epsilon&\epsilon&0&0\\b+\epsilon&x-\epsilon&x&x\\0&y&y&y\\0&y&y&y
\end{bmatrix}$.
\end{center}
Then $R_{\tilde{A_\epsilon}}=R_{\tilde{A}}, C_{\tilde{A_\epsilon}}=C_{\tilde{A}}$ but per$\tilde{A_\epsilon}=6axy^2-\frac{32}{3}\epsilon xy^2+O(\epsilon^2)<6axy^2$=per$(\tilde{A})$ which gives us $\phi(\tilde{A_\epsilon})>\phi(\tilde{A})$ which is a contradiction. Therefore $B$ is not a positive matrix.\\ 
\indent Since $a$per$B$=per$A>0$, we have per$B>0$ and hence without loss of generality we assume that \\
\begin{center}$B=
\begin{bmatrix}
x&l&m\\0&y&v\\0&0&z
\end{bmatrix}$,
\end{center} where $x y z l m v>0$. Consider another perturbation matrix of $A$ \\

\begin{center}$\hat{A}=
\begin{bmatrix}
a&0&0&0\\b-\epsilon&x&l&m+\epsilon\\0&0&y&v\\ \epsilon&0&0&z-\epsilon 
\end{bmatrix}.$
\end{center} 
$\hat{A}$ is a matrix in $K_4$ whose permanent value is $axy(z-\epsilon)$ which is strictly less than per$A$ = $axyz$. So $\phi (\hat{A})$ is strictly greater than $\phi(A)$ which is a contradiction to our assumption that $A$ is $\phi-$ maximizing matrix. So $A$ is fully indecomposable.
\end{proof}

\end{document}